\theoremstyle{plain}
\newtheorem{theo}{\indent\textbf Theorem}[section]
\newtheorem{lemma}[theo]{\indent\textbf Lemma}
\newtheorem{coro}[theo]{\indent\textbf Corollary}
\newtheorem{prop}[theo]{\indent\textbf Proposition}
\theoremstyle{definition} 
\newtheorem{defi}[theo]{\indent\textbf Definition}
\newtheorem{remark}[theo]{\indent\textbf Remark}
\newcommand{\vna}{von Neumann algebra}
\newcommand{\cohaco}{Cowling-Haagerup constant}
\newcommand{\sufa}{subfactor}
\newcommand{\ucp}{unital completely positive}
\newcommand{\cobo}{completely bounded}
\newcommand{\syenal}{symmetric enveloping algebra}
\newcommand{\syenin}{symmetric enveloping inclusion}
\newcommand{\stin}{standard invariant}
\newcommand{\biha}{Bisch-Haagerup}
\newcommand{\tII}{type II$_1$}
\newcommand{\codi}{countable discrete}
\newcommand{\NM}{{N\subset M}}
\newcommand{\tN}{\tilde N}
\newcommand{\tM}{\tilde M}
\newcommand{\tS}{\tilde S}
\newcommand{\tT}{\tilde T}
\newcommand{\TS}{{T\subset S}}
\newcommand{\op}{\text{op}}
\newcommand{\MMop}{M\overline\otimes M^\op}
\newcommand{\MboxM}{M\boxtimes_{\eN} M^\op}
\newcommand{\eN}{{e_N}}
\newcommand{\Lcb}{\Lambda_{\text{cb}}}
\newcommand{\cb}{\text{cb}}
\newcommand{\bdd}{\text{bdd}}
\newcommand{\Vcb}{\Vert_\cb}
\newcommand{\G}{\mathcal G}
\newcommand{\GNM}{\mathcal G_{N,M}}
\newcommand{\C}{\mathbb C}
\newcommand{\Z}{\mathbb Z}
\newcommand{\N}{\mathbb N}
\newcommand{\F}{\mathbb F}
\newcommand{\E}{\mathbb E}
\newcommand{\mQ}{\mathcal Q}
\newcommand{\mN}{\mathcal N}
\newcommand{\mM}{\mathcal M}
\newcommand{\spann}{\text{span}}
\newcommand{\loriar}{\longrightarrow}
\newcommand{\sumk}{\sum_{k\in K}}
\newcommand{\tvpl}{\tilde\varphi_l}
\newcommand{\supp}{\text{supp}}
\newcommand{\ootimes}{\overline\otimes}
\newcommand{\CB}{\text{CB}}
\newcommand{\A}{\mathcal A}
\newcommand{\al}{\alpha}
\newcommand{\gO}{\geqslant 0}
\begin{document}
\title{Weak amenability for subfactors}
\maketitle
\begin{center}
{\sc by Arnaud Brothier\footnote{Vanderbilt University, Department of Mathematics, 1326 Stevenson Center Nashville, TN, 37212, USA,\\ arnaud.brothier@vanderbilt.edu}
}
\end{center}

\begin{abstract}
We define the notions of weak amenability and the Cowling-Haagerup constant for extremal finite index \sufa s of \tII. 
We prove that the \cohaco\ only depends on the standard invariant of the subfactor. 
Hence, we define the \cohaco\ for \stin s.
We explicitly compute the constant for Bisch-Haagerup subfactors and prove that it is equal to the constant of the group involved in the construction.
Given a finite family of amenable \stin s, we prove that their free product in the sense of Bisch-Jones is weakly amenable with constant 1.
We show that the \cohaco\ of the tensor product of a finite family of \stin s is equal to the product of their \cohaco s.
\end{abstract}
\begin{center}
Mathematics Subject Classification 2010: 46L07, 46L37.
\end{center}
\section*{Introduction and main results}
Jones initiated subfactors theory and discovered the celebrated Jones Polynomial for knots \cite{Jones_index_for_subfactors,Jones_hecke_braid_group,Jones_polynome_vna}.
Subfactors theory is connected to many other areas of mathematics and physics such as topological quantum field theory \cite{Evans_Kawahigashi_book}, conformal field theory \cite{Evans_Kawahigashi_92_sf_cft}, statistical mechanics \cite{Jones_knot_stat_mecanic}, and so on.
Given a \sufa\ $\NM$ (a unital inclusion of factors of \tII\ with finite index), Jones associated a combinatorial invariant $\GNM$ called the \stin.
It has been axiomatized as a paragroup by Ocneanu \cite{Ocneanu_quant_group_string_galois}, as a $\lambda$-lattice by Popa \cite{popa_system_construction_subfactor}, and as a planar algebra by Jones \cite{jones_planar_algebra}.
One can extract a rigid C$^*$-tensor category from a \stin. 
Exotic fusion categories have been constructed using this process, see the survey \cite{Jones_Morrison_Snyder_14_Classification}.
Popa proved that any \stin\ can be obtained via a \sufa, see \cite{Popa_Markov_tr_subfactors,popa_system_construction_subfactor,Popa_univ_constr_subfactors}.
Popa and Shlyakhtenko proved in \cite{Popa_Shlyakhtenko_univ_prop_LF_subfactor} that this \sufa\ can be constructed as a \sufa\ of the free group factor with infinitely many generators $L(\mathbb F_\infty)$.
Later on, those results have been proved using planar algebras and free probability techniques in \cite{GJS_random_matrices_free_proba_planar_algebra_and_subfactor,JSW_orthogonal_approach_planar_algebra,GJS_semifinite_algebra,Hartglass_13_GJS} and independently in \cite{Kodiyalam_Sunder_11_GJS}.

Popa initiated the study of analytic properties of \sufa s \cite{Popa_94_book,Popa_classification_subfactors_amenable,Popa_symm_env_T}. 
Such properties are relevant for infinite depth \sufa s. 
The main examples of infinite depth \sufa s are the Temperley-Lieb-Jones subfactors \cite{Jones_index_for_subfactors}, the diagonal \sufa\ \cite{Popa_classification_subfactors_amenable} the \biha\ \sufa s \cite{Bisch_Haagerup_composition_subfactors}, and free product of subfactors \cite{Bisch_Jones_97_Fuss_Catalan,Bisch_Jones_Free_composition} such as the Fuss-Catalan subfactors.
Popa defined the notion of amenability and strong amenability for subfactors and proved that a strongly amenable subfactor is completely described by its \stin\ \cite{Popa_classification_subfactors_amenable}.
This generalized the Reconstruction Theorem of Ocneanu for finite depth \stin s \cite{Ocneanu_quant_group_string_galois}.
Later on, he defined the notion of property (T) for \sufa s \cite{Popa_symm_env_T}. 
Bisch and Popa provided examples of such \sufa s in \cite{Bisch_Popa_subfactors_PropT}.
In \cite{Bisch_Nicoara_Popa_cont_families_same_standard_inv}, Bisch et al. used this notion to prove that there exists uncountably many pairwise non-isomorphic hyperfinite \sufa s at index 6 with the same \stin.
In \cite{Brot_Vaes_13_subfactor_fundamental_group}, using a spectral gap argument, the author and Vaes showed that there exists families of hyperfinite \sufa s at index 6 with the same \stin\ that are non-classifiable by countable structures. 
Note that random walks on fusion algebras associated to subfactors have been studied by Bisch and Haagerup in \cite{Bisch_Haagerup_composition_subfactors} and by Hiai and Izumi in \cite{Hiai_Izumi_98_fusion_alg}.

Given an extremal \sufa\ $\NM$ of \tII, Ocneanu associated to it the asymptotic inclusion $M\vee(M'\cap M_\infty)\subset M_\infty$, where $M_\infty$ is the enveloping algebra of the Jones tower \cite{Ocneanu_quant_group_string_galois}.
This is analogous to the quantum double in the context of \sufa s \cite{Drinfeld_86_ICM}.
Longo and  Rehren gave another construction of quantum doubles for type III \sufa s \cite{Longo_Rehren_95_Nets_sf}.
Popa defined the \syenin\ of an extremal finite index \sufa\ $\NM$ of \tII\ \cite{Popa_94_Sym_env_alg}. 
If $\NM$ is a hyperfinite \sufa\ of \tII\ with finite depth, then the Popa's \syenin\ and the Ocneanu's asymptotic inclusion are isomorphic. 
The \syenin\ is particularly adapted to the study infinite depth \sufa s. 
Property (T) and amenability of a subfactor can be defined as a co-property of the \syenin\ \cite{Popa_symm_env_T}.
Moreover, those properties only depends on the \stin\ of the subfactor.
Popa's \syenin\ can be constructed with a category of bimodules in a similar way than the Longo-Rehren inclusion, see \cite{Masuda_00_LR-construction}.
Later on, Curran et al. constructed in \cite{Curran_Jones_Shlyakhtenko_14_sym_env_alg} a \syenin\ directly from a \stin\ by using planar algebras.

The notion of weak amenability for locally compact groups has been introduced by Cowling and Haagerup in \cite{Cowling-Haagerup-89-completely-bounded-mult}. We remind the definition for countable discrete groups.
A countable discrete group $G$ is weakly amenable if there exists a sequence of finitely supported maps $f_n:G\longrightarrow\C$ that converges pointwise to 1 and such that $\limsup_n\Vert f_n\Vcb < \infty$.
The \cohaco\ of $G$ is the infimum of $\limsup_n\Vert f_n\Vcb < \infty$ for such sequences and is denoted by $\Lcb(G).$
Amenable groups are weakly amenable.
Classical examples of weakly amenable groups are lattices in simple real rank one Lie groups \cite{Cowling-Haagerup-89-completely-bounded-mult}, hyperbolic groups \cite{Ozawa_08_wa_hyperbolic}, or $CAT(0)$-cubical groups \cite{Guentner_Higson_10_wa_cat0}.
Analogous properties and constants for C$^*$-algebras and \vna s have been defined by Haagerup and are called respectively by the completely bounded approximation property (CBAP) and the weak* completely bounded approximation property (W$^*$-CBAP) \cite{Haagerup_Calg_no_cbap}.
Those notions play an important role in classification of group \vna s and in Popa's deformation/rigidity theory \cite{Cowling-Haagerup-89-completely-bounded-mult,Popa_survey_Deformation_rigidity_group_actions_and_vna}.
Note, the notion of weak amenability has been defined and studied for Kac algebras and locally compact quantum groups \cite{Kraus_Ruan_99_AP_Kac_alg, Junge_Neufang_Ruan_09_Rep_th_lcqgroups, Freslon_12_free_product, Freslon_13_weak_amen,DeCommer_Freslon_Yamashita_CCAP}.

In the present article, we define and study the notion of weak amenability and \cohaco\ for extremal finite index \sufa s of \tII.
This is done by following the idea of Popa and his definition of property (T) for \sufa s.
In section \ref{sec:notations}, we introduce our setup and recall classical notations.
In section \ref{sec:CHconstant}, we prove that the \cohaco\ only depends on the \stin\ of the subfactor. 
We prove it by adapting a strategy due to Popa \cite[Section 9]{Popa_symm_env_T}.
This allows us to define the Cowling-Haagerup constant of a \stin.
In section \ref{sec:diagonal}, we compute the \cohaco\ of a diagonal subfactor, which is equal to the \cohaco\ of the group involved. 
This shows that our notion of weak amenability coincides with the classical notion for finitely generated groups.
In section \ref{sec:BH}, we show that the \cohaco\ of a Bisch-Haagerup subfactor is equal to the \cohaco\ of the group involved. We follow a strategy due to Bisch and Popa that we adapt to the context of completely bounded maps \cite{Bisch_Popa_subfactors_PropT}.
In section \ref{sec:free_product}, we consider free product of \stin s defined by Bisch and Jones \cite{Bisch_Jones_97_Fuss_Catalan,Bisch_Jones_Free_composition}.
Given a finite family of amenable \stin s, we prove that their free product is a weakly amenable \stin\ with constant 1. We use a description of the \syenin\ due to Masuda \cite{Masuda_00_LR-construction} and some results due to Ricard and Xu \cite{Ricard_Xu_06_Khintchine}.
In section \ref{sec:tensor_product}, we prove that the \cohaco\ of the tensor product of a finite family of \stin s is equal to the product of their \cohaco s. The proof is similar than the one for groups given by Cowling and Haagerup \cite{Cowling-Haagerup-89-completely-bounded-mult}.

\subsection*{Acknowledgement}
I express my gratitude to Dietmar Bisch for many useful discussions and encouragements. 
I thank Stefaan Vaes for helpful comments on an earlier version of this manuscript.
I also thank Vaughan Jones, Jesse Peterson, and Jacqueline Kirby for stimulating discussions.
\section{Notations and setup}\label{sec:notations}
In the article, any finite index \sufa\ is assumed to be of type II$_1$ and extremal. 
Any \stin s will be supposed to be extremal as well.
Let $\NM$ be a finite index \sufa.
We consider its Jones tower 
$$N\subset M\subset M_1\subset M_2\subset \cdots,$$ 
where $N=M_{-1}$ and $M=M_0$.
We write the Jones projections $e_j, j\geqslant 1$ such that $e_N=e_1$ and $M_{j+1}=\{M_j,e_{j+1}\}''$.
We denote the standard invariant of $\NM$ by $\GNM$, or simply $\G$ if the context is clear.
Let $\Gamma_{N,M}$ be the dual principal graph of $\NM$, where the even vertices $K$ are in bijection with the isomorphism classes of irreducible $M$-bimodules that appear in the Jones tower.
The symmetric enveloping inclusion associated to $\NM$ is denoted by 
$$M\vee M^\op\subset \MboxM,$$ 
or simply by $\TS$.
We fix a tunnel for $\NM$ that we denote by 
$$M_0\supset M_{-1}\supset M_{-2}\supset M_{-3}\supset\cdots,$$ 
and continue to denote by $e_j\in M_j, j\in\Z$ the Jones projections for positive or negative integers.
Following \cite[Lemma 1.4]{Popa_symm_env_T}, we fix an embedding of $\bigcup_j M_j$ and $\bigcup_j M_j^\op$ in $S$ via the choice of the tunnel.

\section{The Cowling-Haagerup constant for subfactors and standard invariants}\label{sec:CHconstant}

In the article, any inclusion of tracial \vna s will be supposed to be unital and tracial.

\begin{defi}\label{defi:dbapid}
Consider an inclusion of tracial \vna s $\mathcal N\subset (\mathcal M,\tau).$ 
A completely bounded approximation of the identity (CBAI) for $\mathcal N\subset (\mathcal M,\tau)$ with constant $C$ is a sequence of maps 
$$\varphi_l:\mathcal M\longrightarrow \mathcal M,\ l\geqslant 0$$ 
such that 
\begin{enumerate}
\item $\varphi_l$ is normal,
\item $\varphi_l$ is a $\mathcal N$-bimodular map,
\item the range of $\varphi_l$ is a bifinite $\mathcal N$-bimodule, i.e. it is finitely generated as a left $\mathcal N$-module and as a right $\mathcal N$-module,
\item $\Vert \varphi_l(x)-x\Vert_2\longrightarrow_l 0,$ for any $x\in\mathcal M$, and
\item $\limsup_l\Vert\varphi_l\Vcb\leqslant C,$ where $\Vert\cdot\Vcb$ denote the completely bounded norm.
\end{enumerate}
\end{defi}

The next definition is inspired by Popa's definition of property (T) for \sufa s \cite{Popa_symm_env_T}.

\begin{defi}\label{defi:Lcb}
Consider a \sufa\ $\NM$ and its associated \syenin\ $\TS$.
The \sufa\ $\NM$ is weakly amenable if there exists a CBAI with a finite constant $C$ for $\TS$.
The \cohaco\ of $\NM$ is the lower bound of the constants $C$ such that $\TS$ admits a CBAI with constant $C$.
We denote it by 
$$\Lcb(N,M).$$
It $\NM$ is not weakly amenable we write $\Lcb(N,M)=\infty.$
\end{defi}

\begin{remark}\label{rem:amenable}
Popa introduced the notion of amenability for \sufa s and \stin s in \cite{Popa_94_book,Popa_classification_subfactors_amenable,Popa_symm_env_T}.
A \sufa\ $\NM$ is amenable if $N$ and $M$ are the hyperfinite II$_1$ factor (the \sufa\ is hyperfinite) and the index is equal to the square of the norm of the principal graph, i.e. $[M:N]=\Vert\Gamma_{N,M}\Vert^2.$
Popa showed that the later condition is equivalent to the following approximation property, see \cite[Remark 3.5.4]{Popa_betti_numbers_invariants}.
Let $\TS$ be the \syenin\ of $\NM$.
There exists a sequence of maps
$$\varphi_l:S\loriar\ S,\ l\geqslant 0$$ 
such that 
\begin{enumerate}
\item $\varphi_l$ is normal,
\item $\varphi_l$ is a $T$-bimodular map,
\item $\varphi_l(\mathcal S)$ is a bifinite $T$-bimodule,
\item there exists a constant $A$ such that $\tau\circ\varphi_l\leqslant A\tau$, where $\tau$ is the unique normal faithful trace on $S$,
\item $\Vert \varphi_l(x)-x\Vert_2\longrightarrow_l 0,$ for any $x\in\mathcal M$, and
\item $\varphi_l$ is unital and completely positive.
\end{enumerate}
In particular, an amenable \sufa\ is weakly amenable with constant 1.
\end{remark}

In the rest of this section, we prove that $\Lcb(N,M)$ only depends on the \stin\ of $\NM$.
We adapt a proof of Popa for completely bounded maps \cite[Section 9]{Popa_symm_env_T}.
The next proposition recall some properties due to Popa of the \syenin.

\begin{prop}\cite[Theorem 4.5]{Popa_symm_env_T}\label{prop:bimod_decompo}
Let $\NM$ be a subfactor and let $\TS$ be its associated \syenin.
Then $\TS$ is an irreducible \sufa\ of \tII.

Denote by $\{H_k,\ k\in K\}$ the set of irreducible $M$-bimodules indexed by the even vertices of the dual graph of $\NM$.
Let $K_n$ be the vertices at distance smaller than $2n$ from the root of the graph.
Then for any $k\in K_n$, there exists $v_k\in M_{-n}'\cap M_n\cap S$ such that $L^2(\spann Tv_kT)$ is isomorphic to the $T$-bimodule $H_k\otimes \overline H_k^\op$.
Further, we have the following decomposition of $T$-bimodules 
$$L^2(S)=\sumk L^2(\spann Tv_kT)\simeq_{T-T}\bigoplus_{k\in K} H_k\otimes \overline H_k^\op .$$
The spaces $L^2(\spann Tv_kT)$ are irreducible $T$-bimodules and are pairwise non-isomorphic.
We write $s_k$ the orthogonal projection from $L^2(S)$ onto $L^2(\spann Tv_kT)$.
\end{prop}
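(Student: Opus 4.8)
The plan is to recall that this proposition is essentially Popa's structural analysis of the symmetric enveloping inclusion \cite[Theorem 4.5]{Popa_symm_env_T}, so my goal is to reconstruct the skeleton of the argument and identify where the work concentrates. The statement has three separate assertions: (i) that $\TS$ is an irreducible \tII\ subfactor, (ii) the existence of the intertwining elements $v_k\in M_{-n}'\cap M_n\cap S$ realizing each irreducible $M$-bimodule $H_k$, and (iii) the explicit $T$-bimodule decomposition of $L^2(S)$ together with irreducibility and pairwise non-isomorphism of the pieces $L^2(\spann Tv_kT)$.

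First I would establish that $T=M\vee M^\op$ is a \tII\ factor sitting inside $S=\MboxM$ with finite index, using the defining property of the symmetric enveloping construction: $S$ is generated by $M$, $M^\op$, and the Jones projection $\eN$, and the trace on $S$ restricts correctly to give $T$ a trace. Irreducibility, i.e. $T'\cap S=\C$, follows from extremality of $\NM$ and the fact that the commutant computation reduces to $(M\vee M^\op)'\cap S$, which Popa identifies with scalars via the symmetry of the construction. Next, for the intertwiner part, I would work inside the Jones tower/tunnel and exploit the standard identification of $M_{-n}'\cap M_n$ with a higher relative commutant encoding the \stin. Each even vertex $k\in K_n$ corresponds to an irreducible $M$-bimodule $H_k$ appearing by distance $2n$; the minimal central projections of $M_{-n}'\cap M_n$ give elements which, after pushing into $S$ via the fixed embedding of $\bigcup_j M_j$ and $\bigcup_j M_j^\op$, produce the $v_k$. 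The key computation is that $L^2(\overline{\spann Tv_kT})$ carries the $T=M\vee M^\op$ action in a way that factors as the external tensor product $H_k\otimes\overline H_k^\op$ of the $M$-bimodule $H_k$ with its conjugate on the opposite side.

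For the decomposition, I would argue that the $v_k$, as $k$ ranges over all even vertices, span a dense subspace of $S$ once we sum over all $n$, so that $L^2(S)=\bigoplus_k L^2(\spann Tv_kT)$ as $T$-bimodules. Irreducibility of each summand reduces, via the tensor decomposition $H_k\otimes\overline H_k^\op$, to irreducibility of $H_k$ as an $M$-bimodule, which holds by construction since $k$ indexes \emph{irreducible} bimodules. Pairwise non-isomorphism then follows because distinct even vertices yield non-isomorphic $H_k$, and the external tensor product with the conjugate separates them. The orthogonal projections $s_k$ are then simply the bimodule-component projections in this direct sum.

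The hard part will be the intertwiner construction in step (ii): precisely locating the $v_k$ inside $M_{-n}'\cap M_n\cap S$ and verifying that $L^2(\spann Tv_kT)\simeq_{T-T} H_k\otimes\overline H_k^\op$. This requires careful bookkeeping of how the fixed embedding of the two-sided tower into $S$ interacts with the relative commutants, and a clean identification of the $T$-bimodule structure with the external tensor product — the symmetry between $M$ and $M^\op$ must be used to match the left action with $H_k$ and the right action with $\overline H_k^\op$. Since this is quoted directly from \cite{Popa_symm_env_T}, I would in practice cite Popa's theorem for this core step rather than redo the relative-commutant analysis, and concentrate my own verification on the irreducibility and orthogonality bookkeeping that the later sections of the paper actually use.
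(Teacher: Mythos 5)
The paper gives no proof of this proposition at all: it is stated as a recalled result and attributed directly to \cite[Theorem 4.5]{Popa_symm_env_T}, which is precisely what you conclude by deferring the core intertwiner construction to Popa. Your reconstructed skeleton (irreducibility of $T'\cap S$, the $v_k$ located in the relative commutants $M_{-n}'\cap M_n$, density of $\sum_k \spann\, Tv_kT$ in $S$, and irreducibility plus pairwise non-isomorphism of the exterior tensor products $H_k\otimes\overline H_k^{\op}$) is consistent with the cited source, so your approach is essentially the same as the paper's.
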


The next proposition gives a useful characterization of $T$-bimodular maps.

\begin{prop}\label{prop:scalar_functions}
Suppose the same assumptions as in Proposition \ref{prop:bimod_decompo}.
Let $a_k:=s_k\vert_S$ be the restriction of the orthogonal projection $s_k$ to $S$.
\begin{enumerate}
\item Then, the range of $a_k$ is contained in $S$ and is equal to $S\cap L^2(\spann Tv_kT).$
\item Let $\varphi:S\loriar S$ be  a $T$-bimodular map such that $\varphi(S)$ is a bifinite $T$-bimodule
Then $\varphi$ is normal and belongs to the vector space generated by the set $\{a_k,\ k\in K\}$.
We write $c_\varphi:K\loriar \C$ the unique scalar valued function that satisfies 
$$\varphi=\sumk c_\varphi(k)a_k.$$
\item The \syenin\ $\TS$ admits a CBAI with constant $C$ if and only if there exists a sequence of finitely supported scalar valued functions $(c_l,\ l\geqslant 0)$ such that 
\begin{enumerate}
\item $c_l(k)\loriar_l 1$, for any $k\in K$, and
\item $\limsup_l\Vert \sumk c_l(k)a_k\Vcb=C$.
\end{enumerate}
\end{enumerate}
\end{prop}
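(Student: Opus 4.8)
The guiding principle throughout is the Schur-type rigidity supplied by Proposition~\ref{prop:bimod_decompo}: the summands $K_k:=L^2(\spann Tv_kT)$ are irreducible and pairwise non-isomorphic $T$-bimodules, so by irreducibility the only nonzero bounded $T$-bimodular maps between the $K_k$ are scalars on a single diagonal, and any $T$-subbimodule of $L^2(S)$ is a sub-sum $\bigoplus_{k\in F}K_k$. For (1), I would exploit that each $K_k\simeq H_k\otimes\overline H_k^\op$ is irreducible, hence bifinite, so it is finitely generated as a right $T$-module and admits a finite Pimsner--Popa basis $\{\eta_i^{(k)}\}_i\subset S\cap K_k$. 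Writing $E_T\colon S\loriar T$ for the trace-preserving conditional expectation, the orthogonal projection onto $K_k$ is then
$$s_k(\xi)=\sum_i\eta_i^{(k)}\,E_T\big((\eta_i^{(k)})^*\xi\big),\qquad\xi\in L^2(S).$$
Restricting to $x\in S$, each summand lies in $S$ because $\eta_i^{(k)}\in S$ and $E_T((\eta_i^{(k)})^*x)\in T\subseteq S$; hence $a_k(S)\subseteq S$, and $a_k$ is normal since $E_T$ is. The range is visibly contained in $S\cap K_k$, and equals it because any element of $S\cap K_k$ is fixed by $s_k$.

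For (2), bifiniteness of $\varphi(S)$ together with the pairwise non-isomorphism of the $K_k$ forces $\overline{\varphi(S)}=\bigoplus_{k\in F}K_k$ for a finite set $F\subseteq K$, whence $\varphi=(\sum_{k\in F}s_k)\circ\varphi$. Fixing $j\in K$ and using $T$-bimodularity, $\varphi(\spann Tv_jT)=\spann T\varphi(v_j)T$ is a $T$-subbimodule of $\overline{\varphi(S)}$ whose closure is a bimodular quotient of the irreducible $K_j$, hence is $0$ or isomorphic to $K_j$; as the $K_k$ are pairwise non-isomorphic it is $0$ unless $j\in F$, and for $j\in F$ Schur's lemma gives $\varphi|_{\spann Tv_jT}=c_\varphi(j)\,a_j|_{\spann Tv_jT}$ for a scalar $c_\varphi(j)$. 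Thus $\varphi$ agrees with the finite sum $\psi:=\sum_{k\in F}c_\varphi(k)a_k$ on the weakly dense subalgebra $\spann\{Tv_kT:k\in K\}$, and $\psi$ is normal and bounded by (1). To conclude $\varphi=\psi$ on all of $S$ I would use the finite left and right $T$-bases of the bifinite range to express $\varphi$ through $E_T$ as above and thereby show $\varphi$ is $\Vert\cdot\Vert_2$-continuous; density then upgrades the identity on the core to all of $S$, and normality of $\varphi$ follows. I expect precisely this promotion---deducing $\Vert\cdot\Vert_2$-continuity and normality from the bifinite-range hypothesis, rather than assuming $\varphi$ bounded at the outset---to be the main obstacle; the finite support of $c_\varphi$ is then immediate, since $c_\varphi(k)\neq0$ forces $K_k\subseteq\overline{\varphi(S)}$.

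For (3), I would simply transport the two descriptions through the dictionary of (2). Given a CBAI $(\varphi_l)$ with constant $C$, each $\varphi_l$ is normal, $T$-bimodular with bifinite range, so $\varphi_l=\sumk c_l(k)a_k$ with $c_l:=c_{\varphi_l}$ finitely supported, and $\limsup_l\Vert\sumk c_l(k)a_k\Vcb=\limsup_l\Vert\varphi_l\Vcb\leqslant C$. Since $a_j(v_k)=\delta_{jk}v_k$ we get $\varphi_l(v_k)=c_l(k)v_k$, so $\vert c_l(k)\vert\leqslant\Vert\varphi_l\Vcb$ and condition (4) of Definition~\ref{defi:dbapid} is equivalent to $c_l(k)\loriar_l1$ for every $k$: the forward implication is immediate, and the converse follows from the orthogonal expansion
$$\Vert\varphi_l(x)-x\Vert_2^2=\sumk\vert c_l(k)-1\vert^2\,\Vert a_k(x)\Vert_2^2$$
by dominated convergence, the domination being furnished by the uniform bound $\sup_k\vert c_l(k)\vert\leqslant\Vert\varphi_l\Vcb<\infty$. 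Conversely, from finitely supported $c_l$ with $c_l(k)\loriar1$ and $\limsup_l\Vert\sumk c_l(k)a_k\Vcb=C$, the maps $\varphi_l:=\sumk c_l(k)a_k$ are normal and $T$-bimodular with bifinite range by (1), satisfy (4) by the same computation, and have $\limsup_l\Vert\varphi_l\Vcb=C$, so they constitute a CBAI with constant $C$.
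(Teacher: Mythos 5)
Your proposal follows the same route as the paper's proof: decompose $L^2(S)$ against the pairwise non-isomorphic irreducible summands $K_k=L^2(\spann Tv_kT)$, use a Schur-type argument to force each block of $\varphi$ to be a scalar on the diagonal, use bifiniteness of the range to get finite support, and then translate CBAIs into finitely supported scalar functions. Your part (3), including the orthogonal expansion and the dominated-convergence argument with the uniform bound $\vert c_l(k)\vert\leqslant\Vert\varphi_l\Vcb$, is exactly the content the paper compresses into ``the rest of the proposition follows,'' and it is correct. Your part (1) is also the intended argument, but note two things: ``irreducible, hence bifinite'' is not a valid implication (bifiniteness of $K_k\simeq H_k\otimes\overline{H_k}^{\op}$ comes from $H_k$ having finite index as an $M$-bimodule), and the existence of a \emph{finite} Pimsner--Popa basis consisting of elements of $S$, rather than merely right-$T$-bounded vectors (which is all the generic Gram--Schmidt construction yields, since it multiplies by unbounded operators affiliated with $T$), is precisely the nontrivial point; the paper outsources exactly this to the proof of Lemma 5.17 in Falgui\`eres's thesis, so asserting it is where the real work hides.

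The step you flag as the main obstacle in (2) --- upgrading $\varphi=\sum_k c_\varphi(k)a_k$ from the algebraic core $\sum_k\mathcal V_k$ to all of $S$ --- is a genuine gap in your write-up: your proposed fix is circular, because the expansion $\varphi(x)=\sum_i\eta_i E_T\bigl(\eta_i^*\varphi(x)\bigr)$ has coefficients that still involve $\varphi(x)$, so no $\Vert\cdot\Vert_2$-continuity of $\varphi$ can be read off from it. You should know, however, that the paper's own proof elides the same point: knowing $s_k\circ\varphi\circ s_l\vert_S=\delta_{kl}\,c_\varphi(k)\,s_k\vert_S$ for all $k,l$ only determines $\varphi$ on $\sum_l\mathcal V_l$, and the paper jumps from there to $\varphi=\sum_k c_\varphi(k)s_k\vert_S$. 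A clean repair, sufficient for every use made of the proposition (in part (3) and in Theorem \ref{theo:wa_for_stin} the maps are normal by Definition \ref{defi:dbapid}): assume $\varphi$ normal in (2); then $\varphi$ and $\sum_k c_\varphi(k)a_k$ are two normal maps agreeing on $\spann\bigcup_k Tv_kT$, which is a weakly dense unital $*$-subalgebra of $S$ (the quasi-normalizer algebra of $T\subset S$ in Popa's Theorem 4.5), hence they coincide. If instead you want the automatic-normality statement as the paper states it, you need the finer analysis of the cited thesis rather than the argument you sketch.
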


\begin{proof}
Let $\varphi:S\loriar S$ be a $T$-bimodular map such that its image is a bifinite $T$-bimodule.
Consider the algebraic $T$-bimodule $\mathcal V_k:=L^2(spTv_kT)\cap S,$ where $k\in K$.
The $\mathcal V_k$ are pairwise non-isomorphic irreducible $T$-bimodules.
Hence the space of intertwiners between $\mathcal V_k$ and $\mathcal V_l$ is trivial if $k\neq l$ and is isomorphic to $\C$ if $k=l$.
By \cite[Proof of Lemma 5.17]{Falguiere_these}, we have that $s_k(S)=\mathcal V_k$.
Therefore, $s_k\circ\varphi\circ s_l\vert_S$ is a $T$-bimodular map between $\mathcal V_l$ and $\mathcal V_k$.
Hence, this map is identically equal to zero if $k\neq l$.
If $k=l$ there exists a scalar $c_\varphi(k)\in\C$ such that $s_k\circ\varphi\circ s_k\vert_S=c_\varphi(k)s_k\vert_S$.
Let $\supp(\varphi)$ be the set of $k\in K$ such that $c_\varphi(k)\neq 0$.
By assumption, the image of $\varphi$ is a bifinite $T$-bimodule.
Hence, $\supp(\varphi)$ is finite and we get 
$$\varphi=\sum_{k\in\supp(\varphi)}c_\varphi(k)s_k\vert_S.$$
The rest of the proposition follows.
\end{proof}

\begin{remark}\label{rem:amenable2}
Proposition \ref{prop:scalar_functions} implies that a \sufa\ $\NM$ is amenable if and only it is a hyperfinite \sufa\ and there exists a sequence of finitely supported scalar valued functions $(c_l,\ l\geqslant 0)$ such that 
\begin{enumerate}
\item $c_l(k)\loriar_l 1$, for any $k\in K$, and
\item $\sumk c_l(k)a_k:S\loriar S$ is a completely positive map,
\end{enumerate} 
where $\TS$ is the \syenin\ associated to $\NM$ and $K$ is the set vertices of the dual principal graph of $\NM$.
By considering $c_l(k_0)^{-1}c_l$, where $k_0$ is the index of the trivial $T$-bimodule $L^2(T)$, we can always assume that an amenable \sufa\ admits a sequence of \ucp\ maps that preserve the unique normal trace-preserving conditional expectation $\E_T: S\loriar T$, and satisfies the assumptions of Remark \ref{rem:amenable2}.
\end{remark}

\begin{theo}\label{theo:wa_for_stin}
Consider two finite index subfactors $\NM$ and $\tilde N\subset\tilde M$ with isomorphic \stin s.
Then $$\Lcb(N,M)=\Lcb(\tilde N,\tilde M).$$
\end{theo}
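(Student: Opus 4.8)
The plan is to reduce the equality of Cowling–Haagerup constants to the fact, established in Proposition~\ref{prop:scalar_functions}, that a CBAI for the symmetric enveloping inclusion is completely determined by a finitely supported scalar function $c_l:K\loriar\C$ together with the completely bounded norm of the associated multiplier $\sumk c_l(k)a_k$. Since the index set $K$ is nothing but the set of even vertices of the dual principal graph, and the multipliers $a_k$ are intrinsically attached to the irreducible $T$-bimodules appearing in the bimodule decomposition of $L^2(S)$, the key point is that all the data entering the characterization in Proposition~\ref{prop:scalar_functions}(3) depends only on the standard invariant. **First I would** make precise the hypothesis: an isomorphism of standard invariants $\GNM\simeq\mathcal G_{\tN,\tM}$ induces a bijection between the even vertices of the two dual principal graphs, and hence a bijection $K\loriar\tilde K$.

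**Next I would** show that this bijection is compatible with the completely bounded norms of the multipliers. The heart of the argument is that the symmetric enveloping inclusion $\TS$, and in particular the $T$-bimodule structure of $L^2(S)$ with its decomposition $\bigoplus_{k\in K} H_k\otimes\overline H_k^\op$ from Proposition~\ref{prop:bimod_decompo}, is itself a functor of the standard invariant. This is precisely the content of Popa's analysis of the symmetric enveloping algebra: the inclusion $\TS$ can be reconstructed from $\GNM$ alone (for instance via Masuda's bimodule-category description). Granting this, an isomorphism of standard invariants yields an isomorphism of the symmetric enveloping inclusions $\TS\simeq\tTS$ that intertwines the projections $s_k$ with $\tilde s_{k}$ under the bijection $K\loriar\tilde K$, and therefore intertwines $a_k$ with $\tilde a_k$.

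**Once this isomorphism is in hand**, the rest is formal. Given any finitely supported scalar function $(c_l)$ realizing a CBAI for $\TS$ with $\limsup_l\Vert\sumk c_l(k)a_k\Vcb=C$, I transport it through the bijection to obtain $(\tilde c_l)$ with $\tilde c_l(k)\loriar_l 1$ for every vertex; because the $*$-isomorphism $\TS\simeq\tTS$ intertwines the multipliers, it preserves complete boundedness and the cb-norm exactly, so $\limsup_l\Vert\sum_{k}\tilde c_l(k)\tilde a_k\Vcb=C$ as well. By Proposition~\ref{prop:scalar_functions}(3) this produces a CBAI for $\tTS$ with the same constant $C$, whence $\Lcb(\tN,\tM)\leqslant\Lcb(N,M)$. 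Running the argument with the roles of the two subfactors exchanged gives the reverse inequality and hence equality.

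**The hard part will be** establishing the intertwining of the symmetric enveloping inclusions at the level needed — that is, producing a genuine trace-preserving $*$-isomorphism $S\simeq\tS$ carrying $T$ onto $\tT$ and the canonical projections $s_k$ onto $\tilde s_k$, rather than merely matching the abstract $T$-bimodule categories. The subtlety is that the symmetric enveloping algebra is defined through a specific choice of tunnel and of generators $v_k\in M_{-n}'\cap M_n\cap S$, and one must check that an abstract isomorphism of standard invariants lifts to a concrete isomorphism respecting these choices up to an inner perturbation that leaves the cb-norms of the multipliers invariant. This is exactly where I expect to follow Popa's strategy from \cite[Section 9]{Popa_symm_env_T}, adapting his functoriality argument — originally used for property (T) — from unital completely positive maps to general completely bounded maps, the only genuinely new ingredient being that the cb-norm, rather than a positivity condition, is the quantity being transported.
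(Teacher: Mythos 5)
Your proposal hinges on the claim that the symmetric enveloping inclusion $\TS$ ``can be reconstructed from $\GNM$ alone,'' so that an isomorphism of standard invariants lifts to a trace-preserving $*$-isomorphism of $\TS$ onto $\tTS$ intertwining the projections $s_k$ and $\tilde s_k$. That claim is false, and it is precisely the obstruction the theorem exists to overcome. The algebra $T=M\vee M^\op\simeq M\ootimes M^\op$ contains a copy of $M$, so its isomorphism class remembers the factor $M$, not just the standard invariant. Concretely: if $\NM$ is a hyperfinite subfactor with standard invariant $\G$, then $N\ootimes L(\F_2)\subset M\ootimes L(\F_2)$ is an extremal subfactor with the \emph{same} standard invariant (tensoring with a II$_1$ factor does not change the higher relative commutants), yet its symmetric enveloping algebra contains the non-injective factor $L(\F_2)$ while $T$ is hyperfinite; since any isomorphism of inclusions would restrict to an isomorphism $T\simeq\tT$, no such isomorphism can exist. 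Masuda's construction does not rescue this, because it is built from the concrete $M$-bimodules $H_k$ (hence from $M$ itself), and the Curran--Jones--Shlyakhtenko planar-algebraic construction produces one particular realization, not the inclusion attached to an arbitrary $\NM$. What \emph{is} combinatorial is the index set $K$ and the scalar functions $c_l$; but the quantity $\Vert\sumk c_l(k)a_k\Vcb$ is a norm computed on the von Neumann algebra $S$, and there is no a priori reason the same scalar function has the same (or even comparable) cb-norm on $\tS$. Establishing exactly that transport is the content of the theorem, and your argument assumes it.

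The paper's proof never produces an isomorphism of symmetric enveloping inclusions; following Popa's Section 9 means something quite different from ``lifting the abstract isomorphism up to inner perturbation.'' Step 1 treats the special case where $N\subset M$ and $\tN\subset\tM$ sit in a non-degenerate commuting square with $N'\cap M_n=\tN'\cap\tM_n$ for all $n$: there one extends a CBAI $\varphi$ of $\TS$ to $\tS$ by an orthonormal-basis formula over the basic construction $\langle\tS,e\rangle$, controls the cb-norm by dilating a Stinespring pair for $\varphi$ to the amplified representation (this is the genuinely analytic step, and the only place where ``completely bounded'' replaces Popa's ``completely positive''), and conversely compresses a CBAI of $\tTS$ by $\E_S$ to go back, using that both maps have the same scalar function. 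Step 2 reduces the general case to Step 1 by interpolating between the two subfactors through the ultrapower $N^\omega\subset M^\omega$ and Popa's canonical hyperfinite model $N^\G(R)\subset M^\G(R)$, which embeds into $M^\omega$ in a non-degenerate commuting square with matching relative commutants. If you want to repair your proposal, this two-step reduction is what must replace the nonexistent isomorphism $\TS\simeq\tTS$; the multiplier bookkeeping you describe is correct but is the easy part.
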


\begin{proof}
\textbf{Step 1.}\textit{
We assume that we have a non-degenerate commuting square
$$\begin{array}{ccc}
\tilde N&\subset&\tilde M\\
\cup&&\cup\\
N&\subset & M
\end{array},$$
such that $N'\cap M_n=\tilde N'\cap \tilde M_n$ for any $n\geqslant 0$.}

Let us show that $\Lcb(\tN,\tM)\leqslant \Lcb(N,M).$
Let $\{M_j,\ j\leqslant 0\}$ be a tunnel for $\NM$.
Denote by $\{e_j,\ j\in\Z\}$ the set of Jones projection associated to the tunnel-tower of $\NM$.
For any $j\leqslant -2,$ consider the \vna\ $\tilde M_j=\tilde M\cap \{e_{j+2}\}'$.
This defines a tunnel for $\tilde N\subset\tilde M$ such that $\tilde M_i'\cap \tilde M_j=M_i'\cap M_j$ for any $i,j\in\Z$.
Denote by $\TS$ and $\tilde T\subset \tilde S$ the \syenin s associated to $\NM$ and $\tilde N\subset\tilde M$ respectively.
Those tunnels define inclusions of $M_j$ in $S$ and $\tilde M_j$ in $\tilde S$ for any $j$ such that we have a commuting square 
$$\begin{array}{ccc}
\tilde M_j&\subset&\tilde S\\
\cup&&\cup\\
M_j&\subset & S
\end{array}.$$
In particular, we have equality of the relative commutants $\tilde M_i'\cap \tilde M_j=M_i'\cap M_j$ for any $i,j\in\Z$ as subalgebras of $\tS.$
Consider a CBAI $(\varphi_l,\ l\geqslant 0)$ for $\TS$ with constant $C=\Lcb(N,M)$.
We fix a natural number $l$.
We want to construct a normal $\tilde T$-bimodular map $\tilde\varphi_l:\tilde S\loriar\tilde S$ such that $\Vert \tilde \varphi_l\Vcb=\Vert\varphi_l\Vcb$ and $\tilde\varphi_l\vert_S=\varphi_l$.
We drop the subscript $l$ and consider a single map $\varphi:=\varphi_l$.
Consider the commuting square 
\begin{equation}\label{equa:square1}\begin{array}{ccc}
\tilde T&\subset&\tilde S\\
\cup&&\cup\\
T&\subset & S
\end{array},\end{equation}
the Jones projection $e:L^2(\tS)\longrightarrow L^2(S)$, and the basic construction $\langle \tS,e\rangle$.
We denote by $Tr$ the trace, possibly semi-finite, of the basic construction $\langle \tS,e\rangle.$

Let us show that the commuting square (\ref{equa:square1}) is non-degenerate.
Proposition \ref{prop:bimod_decompo} implies that the \vna\ $\tS$ is generated by $\tT$ and the family of relatives commutants $\tilde M_{-n} ' \cap M_n$ for $n\gO$.
Note, by construction, for any $i,j\in\Z$ we have equality of the relative commutants $\tilde M_i'\cap \tilde M_j=M_i'\cap M_j$ as subalgebras of $\tS$.
Therefore, $\tS$ is generated by $\tT$ and the family of relatives commutants $M_{-n} ' \cap M_n,\ n\gO$ which are contained in $S$.
Therefore, the commuting square (\ref{equa:square1}) is non-degenerate.

We fix an orthonormal basis $\{m_j,\ j\in J\}\subset \tT$ of $\tT$ over $T$.
This means that for any $x\in\tT$ we have $x=\sum_{j\in J} \E_T(xm_j)m_j^*$ and for any $i,j\in J$ we have $\E_T(m_i^*m_j)\delta_{i,j}f_i$, where $f_i$ is a projection, $\delta_{i,j}$ is the Kronecker symbol and $\E_T$ is the unique normal trace-preserving conditional expectation from $\tT$ onto $T$.
Note that $\{m_j,\ j\in J\}\subset \tT$ is an orthonormal basis of $\tS$ over $S$.
Indeed, since the square (\ref{equa:square1}) is commuting we automatically obtain the first assumption in the definition of an orthonormal basis. Since the square (\ref{equa:square1}) is commuting and non-degenerate we also fulfill the second assumption of the definition of an orthonormal basis.
Hence, for any $x\in\langle \tS,e\rangle$ there exists a unique collection $\{x_{ij}\in S,\ i,j\in J\}$ such that 
$$x=\sum_{i,j\in J}m_iex_{ij}m_j^*.$$
Consider the map 
$$\phi:\langle \tS,e\rangle\longrightarrow\langle \tS,e\rangle,\ x=\sum_{i,j\in J}m_iex_{ij}m_j^*\longmapsto \sum_{i,j\in J}m_ie\varphi(x_{ij})m_j^*.$$
This is a normal $\langle \tT,e\rangle$-bimodular map.

We claim that $\phi$ extends the map $\varphi$.
By Proposition \ref{prop:scalar_functions}, there exists a map $c:K\loriar\C$ such that $\varphi=\sumk c(k)a_k$.
The maps $\phi$ and $\varphi$ are normal and $T$-bimodular.
Hence, it is sufficient to prove that $\phi(v_k)=\varphi(v_k)=c(k)v_k$ for any $k\in K$.
Let us fix $k\in K$.
There exists $n\geqslant 0$ such that $k\in K_n$, where $K_n$ is the set of even vertices of the dual principal graph at distance smaller or equal to $2n$ from the root of the graph.
Let $\{m_j^k,\ j\in J_k\}$ be an orthonormal basis of $\tilde M_{-n}\vee \tilde M_{-n}^\op$ over $M_{-n}\vee M_{-n}^\op,$ where all such \vna s are viewed inside $\tilde T$.
Consider the commuting square
\begin{equation}\label{equa:square2}\begin{array}{ccc}
\tilde M_{-n}\vee \tilde M_{-n}^\op & \subset & \tT \\
\cup & & \cup  \\
M_{-n}\vee M_{-n}^\op & \subset & T 
\end{array}.\end{equation}
Let us show that (\ref{equa:square2}) is non-degenerate.
By construction, the commuting square 
$$\begin{array}{ccc}
\tilde M_{-n} & \subset & \tilde M \\
\cup & & \cup  \\
M_{-n} & \subset & M 
\end{array}$$
is non-degenerate.
Therefore, the following commuting square 
$$\begin{array}{ccc}
\tilde M_{-n}\ootimes \tilde M_{-n}^\op & \subset & \tilde M\ootimes\tilde M^\op \\
\cup & & \cup  \\
M_{-n}\ootimes M_{-n}^\op & \subset & M\ootimes M^\op
\end{array}$$
is non-degenerate as well.
But this commuting square is isomorphic to (\ref{equa:square2}). So we are done.
Since (\ref{equa:square1}) and (\ref{equa:square2}) are non-degenerate commuting squares we obtain that $\{m_j^k,\ j\in J_k\}$ is an orthonormal basis of $\tS$ over $S$.
The vector $v_k$ belongs to $M_{-n}'\cap M_n.$
By \cite[Proposition 2.6.b]{Popa_symm_env_T}, we have the equality 
$$(\tilde M_{-n}\vee \tilde M_{-n}^\op)'\cap\tilde S=\tilde M_n^\op\cap \tilde M_{n}.$$
Therefore, $v_k$ commutes with the whole basis $\{m_j^k,\ j\in J_k\}$.
Following the proof of \cite[Lemma 9.2]{Popa_symm_env_T}, we obtain that $\phi$ does not depend on the choice of the orthonormal basis $\{m_j,\ j\in J\}\subset \tT$ of $\tT$ over $T$.
Denote by $\E_S$ (resp. $\E_{\tT}$) the normal trace-preserving conditional expectation from $\tS$ onto $S$ (resp. onto $\tT$).
We have, 
\begin{align*}
v_k & =\sum_{i,j\in J_k} m_i^k \E_S( {m_i^k}^* v_k m_j^k) e {m_j^k}^*,\\
      & = \sum_{i,j\in J_k} m_i^k v_k \E_S( {m_i^k}^*  m_j^k) e {m_j^k}^* \text{ because }[v_k,m_i^k]=0,\\
      &= \sum_{i,j\in J_k} m_i^k v_k \E_T( {m_i^k}^*  m_j^k) e {m_j^k}^* \text{ because }\E_T\circ \E_{\tT}=\E_T.
\end{align*}

Therefore,
\begin{align*}
\phi(v_k) & =\phi(\sum_{i,j\in J_k} m_i^k v_k \E_T( {m_i^k}^*  m_j^k) e {m_j^k}^*),\\
      & = \sum_{i,j\in J_k} m_i^k \varphi(v_k \E_T( {m_i^k}^*  m_j^k) ) e {m_j^k}^*\\
      & = \sum_{i,j\in J_k} m_i^k c(k) v_k \E_T( {m_i^k}^*  m_j^k) e {m_j^k}^*\\
      &= c(k)v_k=\varphi(v_k).
\end{align*}
This proves the claim.

Let us show that $\Vert \phi\Vcb=\Vert \varphi\Vcb.$
By the Stinespring Dilation Theorem \cite[Theorem B7, p347]{Brown_Ozawa_book}, there exists a normal representation $L_\varphi:S\longrightarrow B(H_\varphi)$ and two continuous linear maps $v_\varphi,w_\varphi:L^2(S)\longrightarrow H_\varphi$ such that for any $y\in S$ 
$$\varphi(y)=v_\varphi^*L_\varphi(y)w_\varphi$$ and 
$$\Vert v_\varphi\Vert=\Vert w_\varphi \Vert=\sqrt{\Vert\varphi\Vcb}.$$
Consider the Hilbert space $\tilde H_\varphi:=\ell^2(J)\otimes\ell^2(J)\otimes H_\varphi$ and the densely defined continuous linear maps 
$$\tilde v_\varphi:L^2(\langle \tS,e\rangle,Tr)\longrightarrow \tilde H_\varphi,\ \widehat{\sum_{i,j\in J}m_iex_{ij}m_j^*} \longmapsto \sum_{i,j\in J}\delta_i\otimes \delta_j\otimes v_\varphi(\widehat{x_{ij}})$$
and
$$\tilde w_\varphi:L^2(\langle \tS,e\rangle,Tr)\longrightarrow \tilde H_\varphi,\ \widehat{\sum_{i,j\in J}m_iex_{ij}m_j^*} \longmapsto \sum_{i,j\in J}\delta_i\otimes \delta_j\otimes w_\varphi(\widehat{x_{ij}}),$$
where $\widehat z$ denote the image of an element of $\langle \tS,e\rangle$ with finite $L^2$-norm in $L^2(\langle \tS,e\rangle, Tr)$.
Consider the following normal representation $$\tilde L_\varphi:\langle \tS,e\rangle\loriar B(\tilde H_\varphi),\ x\longmapsto \sum_{i,j\in J}e_{ij}\otimes 1\otimes L_\varphi(x_{ij}),$$
where $e_{ij}$ is the partial isometry of $\ell^2(J)$ that sends $\delta_j$ to $\delta_i$ and $x=\sum_{i,j\in J} m_i e x_{ij} m_j^*\in \langle \tS,e\rangle.$
One can check that $\phi(x)=\tilde v_\varphi^* \tilde L_\varphi(x) \tilde w_\varphi,$ for any $x\in \langle \tS,e\rangle.$
Hence, $\phi$ is completely bounded and $$\Vert\phi\Vcb\leqslant \Vert \tilde w_\varphi\Vert.\Vert \tilde v_\varphi\Vert=\Vert  w_\varphi\Vert.\Vert  v_\varphi\Vert=\Vert\varphi\Vcb.$$
The maps $\phi$ and $\varphi$ coincides on $S$.
Hence, we have $\Vert \phi\Vcb=\Vert\varphi\Vcb.$

Recall that $\varphi=\sumk c(k)a_k$.
Observe, the restriction $\tilde\varphi=\phi\vert_{\tS}$ of $\phi$ to $\tS$ is equal to $\sumk c(k)\tilde a_k$,
where $\tilde a_k$ is the identity map of $L^2(\spann \tT v_k\tT)\cap \tS$.
In particular, $\tilde\varphi(\tS)$ is contained in $\tS$.
Therefore, $\tilde\varphi:\tS\loriar\tS$ is a completely bounded $\tT$-bimodular map, the $\tT$-bimodule $\varphi(\tS)$ is bifinite, and $\Vert\tilde\varphi\Vcb=\Vert \varphi\Vcb$.
Hence, if $(\varphi_l,\ l\geqslant 0)$ is a CBAI with constant $C$ for $\TS$, then $(\tilde\varphi_l,\ l\geqslant 0)$ is a CBAI with constant $C$ for $\tT\subset\tS$.
We obtain, $\Lcb(\tN,\tM)\leqslant \Lcb(N,M).$

Consider a CBAI $(\phi_l)_l$ for $\tilde T\subset \tilde S$ such that $\limsup_l\Vert\phi_l\Vcb=\Lcb(\tilde N,\tilde M)$.
We define $\varphi_l:=\E_S\circ \phi_l\vert_S,$ where $\E_S:\tilde S\loriar S$ is the unique normal trace-preserving conditional expectation.
We have $\Vert \varphi_l\Vcb\leqslant\Vert\phi_l\Vcb$ for any $l\geqslant 0$.
The maps $\varphi_l$ and $\phi_l$ have the same scalar valued function $c_l:K\loriar\C$ because they coincides on the vector space generated by the set  $\{v_k,\ k\in K\}$.
Therefore by Proposition \ref{prop:scalar_functions}, the sequence $(\varphi_l)_l$ defines a CBAI for $\TS$ such that $\limsup_l\Vert\varphi_l\Vcb\leqslant \Lcb(\tilde N,\tilde M)$.
We get $\Lcb(N,M)=\Lcb(\tilde N,\tilde M)$.

\textbf{Step 2.}
\textit{
We prove the general case.}
We follow the proof of \cite[Theorem 9.5]{Popa_symm_env_T}.
Let $\NM$ and $\tN\subset\tM$ be finite index \sufa s with isomorphic \stin s.
Let $\omega\in\beta\N-\N$ be a non-principal ultrafilter and let $N^\omega$ and $M^\omega$ be the ultrapowers of $N$ and $M$ respectively.
We have a non-degenerate commuting square 
$$\begin{array}{ccc}
N^\omega&\subset&M^\omega\\
\cup&&\cup\\
N&\subset&M
\end{array}.$$
For any $n\geqslant0$ we have $N'\cap M_n=(N^\omega)'\cap(M^\omega)_n$ by \cite[Proposition 1.10]{Pimsner_Popa_86_entropy}.
Hence, by Step 1, we have $\Lcb(N,M)=\Lcb(N^\omega,M^\omega)$.
Via the Reconstruction Theorem of Popa \cite{Popa_Markov_tr_subfactors,popa_system_construction_subfactor,Popa_univ_constr_subfactors}, we associate to $\mathcal G$ a subfactor $N^\G(R)\subset M^\G(R)$ which is constructed with the hyperfinite II$_1$ factor $R$ and the \stin\ $\G$.
By \cite{Popa_95_free_indep}, there exists an embedding of $M^\G(R)$ in $M^\omega$ which defines a non-degenerate commuting square
$$\begin{array}{ccc}
N^\omega&\subset&M^\omega\\
\cup&&\cup\\
N^\G(R)&\subset&M^\G(R)
\end{array}$$
and such that for any $n\geqslant 0$ we have $(N^\G(R))'\cap (M^\G(R))_n=(N^\omega)'\cap (M^\omega)_n.$
Hence, by Step 1, $\Lcb(N^\omega,M^\omega)=\Lcb(N^\G(R),M^\G(R)).$
Therefore, $\Lcb(N,M)=\Lcb(N^\G(R),M^\G(R)).$
Similarly, we have $\Lcb(\tilde N,\tilde M)=\Lcb(N^\G(R),M^\G(R)).$
In conclusion, $\Lcb(N,M)=\Lcb(\tilde N,\tilde M)$.
\end{proof}

\begin{defi}
We say that a \stin\ $\G$ is weakly amenable if there exists a \sufa\ $\NM$ with \stin\ $\G$ which is weakly amenable.
The \cohaco\ of a weakly amenable \stin\ $\G$ is the \cohaco\ of a \sufa\ $\NM$ with \stin\ $\G$.
We denote $\Lcb(\G)$ as this constant.
We write $\Lcb(\G)=\infty$ if $\G$ is not weakly amenable.
\end{defi}

\section{Diagonal subfactors}\label{sec:diagonal}

Consider a II$_1$ factor $P$ and an injective group morphism $\sigma:G\loriar Out(P)$ of a finitely generated group $G=\langle g_1,\cdots,g_m\rangle$ in the outer automorphism group of $P$.
Let $\tilde\sigma:G\loriar Aut(P)$ be a section of $\sigma$ in value in the automorphism group of $P$.
Let $\{e_{ij},\ i,j=0,\cdots, m\}$ be the canonical system of matrix units of the type I$_{m+1}$ factor $\mathcal M_{m+1}(\C)$.
The diagonal subfactor associated to $\tilde\sigma$ and the generating set $\{g_1,\cdots,g_m\}$ is 
$$\{\sum_{i=0}^m \tilde\sigma(g_i)(x)\otimes e_{ii},\ x\in P\}\subset P\otimes \mathcal M_{m+1}(\C),$$
where $\tilde\sigma(g_0)=1$.
It is an extremal \sufa\ of \tII\ with index equal to $(m+1)^2$.
We continue to denote by $\tilde\sigma$ the map from $G$ to the automorphism group of $P\otimes \mathcal M_{m+1}(\C)$ defined by 
$\tilde\sigma(g)(x\otimes e_{ij})=\tilde\sigma(g)(x)\otimes e_{ij}$ for $g\in G$, $x\in P$ and $0\leqslant i,j\leqslant m$.
Note that the group morphism $\sigma\otimes\sigma^\op$ has vanishing $H^3(G,\mathbb T)$ cohomology obstruction by \cite{Jones_80_action_hyperfinite}.
Then, it defines an outer action, possibly twisted by a cocycle, of $G$ on $\MMop$ where $M=P\otimes\mathcal M_{m+1}(\C).$

\begin{theo}\cite[Theorem 3.3]{Popa_symm_env_T}\label{theo:diagonal}
Let $\NM$ be the subfactor described above and let $\TS$ be its \syenin.
Then $\TS$ is isomorphic to $$\MMop\subset \MMop\rtimes_{\sigma\otimes\sigma^\op}G.$$
\end{theo}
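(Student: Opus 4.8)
The plan is to build an explicit trace-preserving $*$-isomorphism of $\TS$ onto $\MMop\subset\MMop\rtimes_{\sigma\otimes\sigma^\op}G$ that is the identity on $T$ and that carries the canonical group unitaries $u_g$ to suitable unitaries of $S$ normalizing $T$. First I would pin down the small algebra and the bimodule picture. In $S$ the factors $M$ and $M^\op$ commute, and the trivial even vertex $k_0$ of Proposition \ref{prop:bimod_decompo} gives $L^2(\spann Tv_{k_0}T)=L^2(T)$ with $H_{k_0}\otimes\overline H_{k_0}^\op=\LM\otimes L^2(M^\op)$; since the two actions commute and the trace is the product trace, this yields $T=M\vee M^\op\simeq\MMop$. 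Moreover, in the diagonal case the irreducible $M$-bimodules occurring in the Jones tower are precisely the invertible bimodules $H_g={}_1\LM_{\tilde\sigma(g)}$ for $g\in G$, with left and right statistical dimension $1$ and fusion $H_g\boxtimes_M H_h\simeq H_{gh}$; as $\sigma$ is injective into $Out(P)$ these are pairwise non-isomorphic, so the even vertices $K$ of the dual principal graph are in bijection with $G$.

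The heart of the argument is to promote the generators $v_g$ of Proposition \ref{prop:bimod_decompo} to unitaries. For each $g\in K\simeq G$ we have $v_g\in M_{-n}'\cap M_n\cap S$ with $L^2(\spann Tv_gT)\simeq H_g\otimes\overline H_g^\op$ as a $T$-bimodule. Because $H_g$ is invertible, this $T$-bimodule is itself invertible, i.e.\ one-dimensional over $T$; hence $v_g$ can be normalized to a unitary $w_g\in S$ that normalizes $T$. I would then check that $\mathrm{Ad}(w_g)$ restricts to $\tilde\sigma(g)$ on $M$ and to $\tilde\sigma(g)^\op$ on $M^\op$ — this is forced by the fact that $H_g$ is obtained from $M$ by twisting the right action by $\tilde\sigma(g)$ — so that $\mathrm{Ad}(w_g)$ implements $\sigma(g)\otimes\sigma^\op(g)$ on $T$.

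Next I would extract the group law. Fusion $H_g\boxtimes_M H_h\simeq H_{gh}$ forces $w_gw_h$ to lie in the same one-dimensional $T$-bimodule as $w_{gh}$, so $w_gw_h=\lambda(g,h)w_{gh}$ for a scalar $2$-cocycle $\lambda\in Z^2(G,\mathbb T)$; the vanishing of the $H^3(G,\mathbb T)$ obstruction (Jones) guarantees that this cocycle action is consistent and matches the cocycle implicit in $\rtimes_{\sigma\otimes\sigma^\op}G$. With these ingredients I would define $\Phi(xu_g)=xw_g$ for $x\in\MMop\simeq T$. The orthogonal $T$-bimodule decomposition $L^2(S)=\bigoplus_{g\in G}L^2(\spann Tv_gT)$ of Proposition \ref{prop:bimod_decompo} identifies the $g$-th summand with the closure of $Tw_g$; this shows simultaneously that the subspaces $Tw_g$ are pairwise orthogonal, that $w_g$ is trace-orthogonal to $T$ for $g\neq e$, and that $T$ together with $\{w_g,\ g\in G\}$ generate $S$. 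Hence $\Phi$ is a surjective trace-preserving $*$-homomorphism, therefore a $*$-isomorphism of inclusions.

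The main obstacle is the middle step: verifying that the abstract generators $v_g$ really are proportional to unitaries normalizing $T$ and that $\mathrm{Ad}(w_g)$ implements exactly $\sigma(g)\otimes\sigma^\op(g)$ with the correct cocycle. This is precisely where the invertibility of the bimodules $H_g$ and the $H^3$-vanishing input enter, and tracking the left/right twisting carefully across $M$ and $M^\op$ is the delicate part of the computation.
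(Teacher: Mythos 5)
The paper itself contains no proof of this theorem: it is quoted directly from Popa \cite[Theorem 3.3]{Popa_symm_env_T}, so your attempt can only be measured against Popa's original argument, and your route is genuinely different. Popa works concretely with the diagonal picture $M=P\otimes\mathcal M_{m+1}(\C)$, exhibiting explicit unitaries in $S$ that implement $\tilde\sigma(g)\otimes\tilde\sigma(g)^\op$ (built from the matrix units and the Jones projections) and showing they generate $S$ over $M\vee M^\op$. You instead argue abstractly from the $T$-bimodule decomposition of $L^2(S)$ in Proposition \ref{prop:bimod_decompo}; note this is Popa's Theorem 4.5, which is proved for arbitrary extremal subfactors independently of his Theorem 3.3, so there is no circularity in using it. Your observations that $K$ is in bijection with $G$ (injectivity of $\sigma$ into $Out(P)$ passes to $Out(M)$ since a unitary commuting with $1\otimes\mathcal M_{m+1}(\C)$ lies in $P\otimes 1$), and that each $H_g\otimes\overline H_g^\op$ is an invertible $T$-bimodule, are correct. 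What your approach buys is generality: it shows that whenever the dual even fusion ring of a standard invariant is a group ring $\C[G]$, the \syenin\ is a twisted crossed product by $G$. What Popa's approach buys is an explicit identification of the unitaries and of the cocycle in terms of the chosen lift $\tilde\sigma$.

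Two steps in your sketch need to be carried out, and one is stated imprecisely. First, the promotion of $v_g$ to a unitary: take the vector $\xi\in L^2(\spann Tv_gT)$ corresponding to $\hat 1$ under the bimodule isomorphism with the twisted standard bimodule, view it as an operator affiliated with $S$ with polar decomposition $w|\xi|$; the intertwining relation $x\xi=\xi\theta(x)$ forces $|\xi|$ and the defect projections of $w$ to commute with $T$, and irreducibility $T'\cap S=\C$ (also contained in Proposition \ref{prop:bimod_decompo}) makes them scalars, so $w$ is a unitary normalizing $T$. Second, the relation $w_gw_h=\lambda(g,h)w_{gh}$ with \emph{scalar} $\lambda$ is not correct as written unless $\tilde\sigma$ is a genuine homomorphism: since $\tilde\sigma$ is only a lift, one has $\tilde\sigma(g)\tilde\sigma(h)=\mathrm{Ad}(c(g,h))\circ\tilde\sigma(gh)$ for unitaries $c(g,h)\in P$, and after normalizing $w_g$ so that $\mathrm{Ad}(w_g)\vert_T=\tilde\sigma(g)\otimes\tilde\sigma(g)^\op$ exactly (possible, with only a scalar ambiguity, again by $T'\cap S=\C$), associativity in $S$ yields
\begin{equation*}
w_gw_h=\lambda(g,h)\,\bigl(c(g,h)\otimes c(g,h)^\op\bigr)\,w_{gh},\qquad \lambda(g,h)\in\mathbb T,
\end{equation*}
so the cocycle is $\mathcal U(T)$-valued and is scalar only relative to the canonical unitaries $c(g,h)\otimes c(g,h)^\op$. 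This is precisely the ``outer action, possibly twisted by a cocycle'' mentioned in the paragraph preceding the theorem; Jones' $H^3$-vanishing guarantees that the right-hand crossed product is well defined, not that your cocycle matches a pre-chosen one. Reading the statement as the paper intends — $\TS$ is the twisted crossed product for the cocycle action that arises — your proof is complete once this bookkeeping is done; pinning down the cohomology class exactly, as Popa's statement does, requires his explicit computation.
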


\begin{coro}\label{coro:diagonal}
Let $\NM$ be the subfactor as above.
We have $\Lcb(N,M)=\Lcb(G)$.
\end{coro}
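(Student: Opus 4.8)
The plan is to combine Theorem \ref{theo:diagonal} with the structural description of $T$-bimodular maps from Proposition \ref{prop:scalar_functions}, reducing the computation of $\Lcb(N,M)$ to the classical definition of $\Lcb(G)$ for the group. By Theorem \ref{theo:diagonal}, the \syenin\ $\TS$ is isomorphic to the crossed product inclusion $\MMop \subset \MMop\rtimes_{\sigma\otimes\sigma^\op} G$, so I may work entirely with this crossed product. The central observation is that the even vertices $K$ of the dual principal graph of a diagonal subfactor are indexed by the group $G$ itself (this is the standard computation underlying the diagonal subfactor construction), and the $T$-bimodule decomposition of Proposition \ref{prop:bimod_decompo} matches the decomposition of $L^2(\MMop\rtimes G)$ into the spectral subspaces $L^2(\MMop)u_g$ indexed by group elements $g\in G$, where $u_g$ are the canonical unitaries implementing the action.

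First I would make the dictionary precise: under the isomorphism of Theorem \ref{theo:diagonal}, the generating vectors $v_k$ of Proposition \ref{prop:bimod_decompo} correspond (up to the appropriate normalization) to the implementing unitaries $u_g\in S$, and the projection $s_k$ becomes the projection $s_g$ onto $L^2(\MMop)u_g$. Consequently the rank-one $T$-bimodular maps $a_k$ become the maps $a_g$ sending $u_h\mapsto \delta_{g,h}u_h$, that is, the multiplier acting as the indicator function $\mathbf{1}_{\{g\}}$. The key point is then that for a finitely supported function $c:G\loriar\C$, the map $\sum_{g} c(g)a_g$ on $S=\MMop\rtimes G$ is exactly the Herz--Schur multiplier $m_c$ associated to $c$, and by the standard theory of completely bounded multipliers on crossed products by an outer (cocycle) action, its completely bounded norm equals the completely bounded norm $\Vert c\Vcb$ of $c$ as a multiplier of $G$. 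I would cite the Bo\.zejko--Fendler / Haagerup characterization of $\Vert c\Vcb$ as the Herz--Schur norm; the outerness of the action (guaranteed by the injectivity of $\sigma$ into $Out(P)$) ensures no collapse occurs and the identification of norms is exact.

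With this identification in hand, part (3) of Proposition \ref{prop:scalar_functions} translates directly: $\TS$ admits a CBAI with constant $C$ if and only if there is a sequence of finitely supported functions $c_l:G\loriar\C$ with $c_l(g)\loriar_l 1$ for each $g\in G$ and $\limsup_l \Vert c_l\Vcb = C$. But this is precisely the condition defining $\Lcb(G)$, as recalled in the Introduction: $\Lcb(G)$ is the infimum of $\limsup_l\Vert f_l\Vcb$ over sequences of finitely supported $f_l$ converging pointwise to $1$. Taking the infimum over all such sequences on both sides yields $\Lcb(N,M)=\Lcb(G)$, and the case where no such sequence exists gives the convention $\Lcb(N,M)=\Lcb(G)=\infty$ simultaneously.

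The main obstacle I anticipate is establishing the norm identity $\Vert\sum_g c(g)a_g\Vcb = \Vert c\Vcb$ with exact equality rather than a two-sided inequality with constants. The inequality $\Vert m_c\Vcb\leqslant\Vert c\Vcb$ is the routine direction, following from the Stinespring/Gilbert representation of Herz--Schur multipliers. The reverse inequality requires recovering the multiplier norm of $c$ on $G$ from its action on the crossed product; here one uses a conditional-expectation or compression argument onto the copy of $L(G)$ sitting inside $\MMop\rtimes G$ (available because the action is trace-preserving and the fixed-point data behaves well under the diagonal construction), together with the fact that completely bounded norms do not increase under such compressions. Care is needed because the action may be twisted by a $2$-cocycle arising from the vanishing $H^3$ obstruction, so strictly speaking one works with the cocycle-twisted multiplier norm; however, the cocycle is scalar-valued and cancels in the completely bounded norm computation, so it does not affect the final constant. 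Verifying that this compression is genuinely completely bounded with $\cb$-norm one, in the possibly twisted setting, is the one step I would treat with the most care.
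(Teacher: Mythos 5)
Your reduction is essentially the paper's: identify the even vertices $K$ of the dual principal graph with $G$, identify the maps $a_k$ of Proposition \ref{prop:scalar_functions} with the Fourier-mode cut-offs $tu_h\mapsto\delta_{g,h}tu_h$ on $S=\MMop\rtimes_{\sigma\otimes\sigma^\op}G$, and read Proposition \ref{prop:scalar_functions}(3) against the definition of $\Lcb(G)$; the easy inequality $\Vert\sum_g c(g)a_g\Vcb\leqslant\Vert c\Vcb$ is handled the same way (Herz--Schur multipliers extend to crossed products). The genuine difference is the hard inequality. The paper sets $f_l(g):=\tau(\varphi_l(u_g)u_g^*)$ and follows the averaging strategy of \cite[Theorem 12.3.10]{Brown_Ozawa_book}, whereas you compress by the trace-preserving conditional expectation onto the von Neumann subalgebra generated by the $u_g$. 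Be careful here: when the $2$-cocycle $\omega$ is nontrivial, that subalgebra is the twisted group algebra $L_\omega(G)$ and there is no canonical copy of $L(G)$ inside $S$, so your compression only bounds the \emph{twisted} multiplier norm of $c$ by $\Vert\sum_g c(g)a_g\Vcb$; the assertion that ``the cocycle cancels'' is exactly the remaining content, and it does not follow from complete contractivity of the expectation alone. The standard way to close this (which also fills in the paper's own citation, since \cite[Theorem 12.3.10]{Brown_Ozawa_book} is stated for untwisted group von Neumann algebras) is the untwisting trick: $\lambda_g\mapsto u_g\otimes\bar u_g$ extends to a trace-preserving normal embedding of $L(G)$ into $S\ootimes\bar S$, because the cocycles of $S$ and of its conjugate algebra $\bar S$ cancel; since $(m_c\otimes\mathrm{id}_{\bar S})(u_g\otimes\bar u_g)=c(g)\,u_g\otimes\bar u_g$, the map $m_c\otimes\mathrm{id}_{\bar S}$ restricts to this copy of $L(G)$ as the multiplier $m_c^{L(G)}$, whence $\Vert m_c^{L(G)}\Vcb\leqslant\Vert m_c\Vcb$, and Bo\.zejko--Fendler/Haagerup identifies $\Vert m_c^{L(G)}\Vcb$ with $\Vert c\Vcb$. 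With that supplement your argument is complete, and what it buys over the paper's formulation is a single exact norm identity $\Vert\sum_g c(g)a_g\Vcb=\Vert c\Vcb$ from which both inequalities of the corollary follow at once; also note that outerness of the action is what makes $K\cong G$ (distinct $g$ give non-isomorphic bimodules), not what makes the norm identification exact.
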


\begin{proof}
Let $(f_l)_l$ be a sequence of finitely supported maps that converge pointwise to 1 and such that $\limsup_l \Vert f_l\Vcb=\Lcb(G).$
Consider the multiplier 
$$\varphi_l:=1\otimes m_{f_l}:S\loriar S$$ 
defined as $\varphi_l(tu_g)=f_l(g)tu_g$, where $t\in\MMop$ and $g\in G$.
Then $(\varphi_l)_l$ is clearly a CBAI for $\MMop\subset \MMop\rtimes_{\sigma\otimes\sigma^\op} G$ such that $\limsup_l\Vert \varphi_l\Vcb=\Lcb(G).$
Hence, $\Lcb(N,M)\leqslant \Lcb(G)$.
Let $(\varphi_l)_l$ be a CBAI with constant $\Lcb(N,M)$ for $\MMop\subset \MMop\rtimes_{\sigma\otimes\sigma^\op} G.$ 
Consider the map $f_l(g):=\tau(\varphi_l(u_g)u_g^*)$, and follow the strategy of \cite[Theorem 12.3.10]{Brown_Ozawa_book}.
It follows that $\Lcb(N,M)\geqslant \Lcb(G).$
We reach the conclusion of the corollary.
\end{proof}

\begin{remark}
Consider a hyperfinite subfactor $\NM$ and its associated \syenin\ $\TS$.
Let $\Lcb(S)$ be the classical \cohaco\ of the \vna\ $S$, see \cite[page 365] {Brown_Ozawa_book}.
It is important to notice that in general $\Lcb(S)$ is different from $\Lcb(N,M)$ even if $T$ is hyperfinite.
We present an example of a hyperfinite subfactor $\NM$ such that $\Lcb(N,M)=1$ and $\Lcb(S)=\infty$.

Consider the group \vna\ $R:=L(\Z_2^{\F_2\times \Z}\rtimes \Z)$, where $\F_2$ is the free group with two generators.
By the Fundamental Theorem of Connes \cite{Connes_76_classification_inj_factors}, $L(\Z_2^{\F_2\times \Z}\rtimes \Z)$ is isomorphic to the hyperfinite II$_1$ factor because the group $\Z_2^{\F_2\times \Z}\rtimes \Z$ is amenable and has infinite conjugacy classes.
Consider the Bernoulli action of $\F_2$ on $\Z_2^{\F_2\times \Z}$.
This induces an outer action $\theta$ of $\F_2$ on $R$.
Let $\NM$ be a diagonal subfactor associated to the action $\theta$ and any finite generating set of the group $\F_2$.
By Theorem \ref{theo:diagonal}, we have that the \syenin\ $\TS$ associated to $\NM$ is isomorphic to 
$$M\ootimes M^\op\subset M\ootimes M^\op\rtimes_{\theta\otimes\theta^\op}\F_2.$$
Therefore, $\Lcb(N,M)=\Lcb(\F_2)$ by Corollary \ref{coro:diagonal}.
Hence, $\Lcb(N,M)=1$ by \cite{Haagerup-79-An-ex-of-Non-nucl,Canniere_Haagerup_85_weak_amenability, Boszejko_Picardello_93_weak_amenable_groups}.
The \syenal\ $S$ is isomorphic to the group \vna\ 
$$L([(\Z_2^{\F_2\times \Z}\rtimes \Z) \times (\Z_2^{\F_2\times \Z}\rtimes \Z) ]\rtimes \F_2).$$
Hence, $L(\Z_2^{\F_2}\rtimes \F_2)$ is a von Neumann subalgebra of $S$.
But the group $\Z_2^{\F_2}\rtimes \F_2$ is not weakly amenable by \cite{Ozawa_Popa_factor_un_cartan,Ozawa_12_not_wa}.
Recall the \cohaco\ of a \codi\ group and the \cohaco\ of its associated group \vna\ coincides.
Further, if $H<G$ is an inclusion of countable discrete groups, then $\Lcb(H)\leqslant \Lcb(G).$
Therefore, 
$$\Lcb(S)=\Lcb( [ ( \Z_2^{\F_2\times \Z}\rtimes \Z) \times (\Z_2^{\F_2\times \Z}\rtimes \Z) ]\rtimes \F_2) \geqslant \Lcb(\Z_2^{\F_2}\rtimes \F_2)= \infty\neq 1=\Lcb(N,M).$$
\end{remark}

\section{Bisch-Haagerup subfactors}\label{sec:BH}

We follow a similar strategy developed by Bisch and Popa in \cite{Bisch_Popa_subfactors_PropT}. 

\begin{prop}\label{prop:CBAI_finite_index}
Consider a chain of II$_1$ factors $\mQ\subset \mN\subset \mM$ such that $I:=[\mN:\mQ]<\infty$.
Let $p\in\mN$ be a non-zero projection.
Then the following assertions are equivalent:
\begin{enumerate}
\item $\mN\subset \mM$ admits a CBAI with constant $C$.
\item $\mQ\subset \mM$ admits a CBAI with constant $C$.
\item $p\mN p\subset p\mM p$ admits a CBAI with constant $C$.
\end{enumerate}
\end{prop}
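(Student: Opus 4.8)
The plan is to establish the three equivalences by proving a cycle of implications, exploiting the fact that completely bounded maps behave well under compression by projections, amplification, and the presence of a finite-index bimodule between the relevant algebras. The key technical tool throughout is that a CBAI on one algebra can be transported to another when the two are related by a finite-index inclusion or a corner, because in each case one can write the transported map as a ``sandwich'' of the original map by bounded bimodule maps (multiplication by orthonormal basis elements, or cut-down by a projection), and such operations do not increase the completely bounded norm. I would organize the argument around the implications $(1)\Leftrightarrow(2)$ and $(1)\Leftrightarrow(3)$.

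For the equivalence $(1)\Leftrightarrow(2)$, the point is that $\mQ\subset\mN$ is a finite-index inclusion, so one passes between a $\mN$-bimodular CBAI on $\mM$ and a $\mQ$-bimodular one. To go from $(1)$ to $(2)$, given a $\mN$-bimodular CBAI $(\varphi_l)$ one simply notes that a $\mN$-bimodular map is automatically $\mQ$-bimodular, and the only thing to check is that the range remains bifinite as a $\mQ$-bimodule; this follows from $I=[\mN:\mQ]<\infty$, since a bifinite $\mN$-bimodule is bifinite over $\mQ$. Conversely, to go from $(2)$ to $(1)$, given a $\mQ$-bimodular CBAI $(\psi_l)$, I would average over an orthonormal basis $\{n_j\}$ of $\mN$ over $\mQ$ to symmetrize and produce a $\mN$-bimodular map, setting something like $\varphi_l(x)=\sum_{i,j}n_i\E_{\mN}(n_i^*\psi_l(x)n_j)n_j^*$ or a suitable variant; the completely bounded norm is controlled by the basis, and the pointwise $\Vert\cdot\Vert_2$-convergence is preserved because the basis is finite. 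The cleanest formulation uses the basic construction and a Stinespring sandwich exactly as in Step 1 of the proof of Theorem \ref{theo:wa_for_stin}, so $\Vert\varphi_l\Vcb\leqslant\Vert\psi_l\Vcb$ and the constant $C$ is preserved in both directions.

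For the equivalence $(1)\Leftrightarrow(3)$, I would use compression by the projection $p\in\mN$. Given a $\mN$-bimodular CBAI $(\varphi_l)$ on $\mM$, the compressed maps $x\mapsto p\varphi_l(x)p$ restricted to $p\mM p$ are $p\mN p$-bimodular, have $\Vert\cdot\Vcb$ no larger than $\Vert\varphi_l\Vcb$ since compression by a projection is completely contractive, and send bifinite $\mN$-bimodules to bifinite $p\mN p$-bimodules; the $\Vert\cdot\Vert_2$-convergence passes to the corner as well. For the converse, from a CBAI on $p\mN p\subset p\mM p$ I would use the fact that $\mN$ is a factor, so $p$ is a full projection and $p\mN p\subset p\mM p$ is Morita-equivalent (stably isomorphic) to $\mN\subset\mM$; concretely one amplifies by a system of matrix units implementing the equivalence and spreads the corner map over all of $\mM$, again controlling the completely bounded norm by the (finite) size of the amplification.

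\textbf{The main obstacle} I expect is the converse direction in each equivalence, namely reconstructing a genuinely $\mN$-bimodular map with controlled completely bounded norm and, crucially, with bifinite range, rather than merely a bounded or completely positive one. Preserving bifiniteness of the range under the averaging or amplification procedure is the delicate point: one must verify that the symmetrized map still has range inside a finitely generated $\mN$-sub-bimodule, which relies on finiteness of the index $I$ and the basic construction machinery, and that the completely bounded estimate does not pick up an uncontrolled factor growing with $l$. Once the Stinespring-dilation bookkeeping of Step 1 is adapted to the corner and to the $\mQ\subset\mN$ finite-index inclusion, the norm bounds and the convergence are routine, but the simultaneous control of $\Vert\cdot\Vcb$ \emph{and} bifiniteness is where the care is needed.
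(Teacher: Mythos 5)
Your plan follows the paper's proof step for step: $(1)\Rightarrow(2)$ by noting that $\mN$-bimodularity implies $\mQ$-bimodularity and that bifiniteness descends along the finite-index inclusion $\mQ\subset\mN$; $(2)\Rightarrow(1)$ by averaging over a finite orthonormal (Pimsner--Popa) basis of $\mN$ over $\mQ$, with a Stinespring dilation to control the completely bounded norm; $(1)\Rightarrow(3)$ by compression by $p$, using $\varphi_l(pxp)=p\varphi_l(x)p$; and $(3)\Rightarrow(1)$ by amplification by matrix units followed by an application of $(1)\Rightarrow(3)$, which is exactly how the paper closes the cycle. Your diagnosis of where the care is needed (keeping bifiniteness of the range and the constant $C$ simultaneously) is also accurate.

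There is, however, a concrete error in the one formula you actually write down, and it sits in the crucial direction $(2)\Rightarrow(1)$. The map
$$x\longmapsto \sum_{i,j}n_i\,\E_{\mN}(n_i^*\psi_l(x)n_j)\,n_j^*$$
has range contained in $\mN$, since every factor of every summand lies in $\mN$; hence it cannot satisfy $\Vert\varphi_l(x)-x\Vert_2\loriar 0$ for $x\in\mM\setminus\mN$, and the same collapse occurs for any conditional expectation placed outside $\psi_l$. The correct symmetrization --- the paper's --- uses no expectation at all and puts the basis elements \emph{inside} the argument:
$$\tvpl(x)=\frac{1}{I^2}\sum_{i,j\in J}m_i\,\psi_l(m_i^*xm_j)\,m_j^*,$$
where $\{m_j,\ j\in J\}\subset\mN$ is a finite orthonormal basis of $\mN$ over $\mQ$. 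Then $\mQ$-bimodularity of $\psi_l$ yields $\mN$-bimodularity of $\tvpl$, finiteness of $J$ gives bifiniteness of the range and the $\Vert\cdot\Vert_2$-convergence, and the Stinespring sandwich $\tilde v=\frac{1}{I}\sum_j\pi(m_j)vm_j^*$, $\tilde w=\frac{1}{I}\sum_j\pi(m_j)wm_j^*$, combined with the identity $\frac{1}{I}\sum_j m_jm_j^*=1$, gives $\Vert\tvpl\Vcb\leqslant\Vert\psi_l\Vcb$. Note that this normalization and identity are precisely what make the constant come out \emph{equal} to $C$, rather than merely ``controlled by the basis''; likewise, in $(3)\Rightarrow(1)$ one needs $\Vert\varphi\otimes\mathrm{id}_{\mM_n(\C)}\Vcb=\Vert\varphi\Vcb$ independently of $n$, not a bound growing with the amplification, since the proposition asserts the same constant. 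Finally, Step 1 of Theorem \ref{theo:wa_for_stin} is not quite the right template to invoke: that argument \emph{extends} a map along a non-degenerate commuting square via the basic construction, whereas what is needed here is the averaging above, Stinespring entering only to verify the norm estimate.
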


\begin{proof}
Proof of $(1)\Rightarrow (2)$.
Let $(\varphi_l)_l$ be a CBAI with constant $C$ for $\mN\subset\mM$.
By assumption $\varphi_l$ is a $\mN$-bimodular map and so is a $\mQ$-bimodular map.
The range of $\varphi_l$ is a bifinite $\mN$-bimodule and $[\mN:\mQ]$ is finite.
Hence, the range of $\varphi_l$ is a bifinite $\mQ$-bimodule.
Therefore, $(\varphi_l)_l$ defines a CBAI with constant $C$ for $\mQ\subset\mM$.

Proof of $(2)\Rightarrow (1)$.
Let $(\varphi_l)_l$ be a CBAI with constant $C$ for $\mN\subset\mM$.
Consider a finite orthonormal basis $\{m_j,\ j\in J\}\subset \mN$ of $\mN$ over $\mQ$,
i.e. $\sum_{j\in J}m_jE_\mQ(m_j^*x)=x,$ for any $x\in \mN$.
Consider the map 
$$\tvpl:\mM\loriar \mM,\ x\longmapsto \frac{1}{I^2}\sum_{i,j\in J}m_i\varphi_l(m_i^* x m_j)m_j^*.$$ 
If $x\in\mM$ and $y\in\mN$, then 
\begin{align*}
\tvpl(yx)&=\frac{1}{I^2}\sum_{i,j\in J}m_i\varphi_l(m_i^* yx m_j)m_j^*\\
&=\frac{1}{I^2}\sum_{i,j,k\in J}m_i\varphi_l(E_Q(m_i^* y m_k) m_k^*x m_j)m_j^*\\
&=\frac{1}{I^2}\sum_{i,j,k\in J}m_i E_Q (m_i^* y m_k ) \varphi_l(m_k^* x m_j)m_j^*\\
&=\frac{1}{I^2}\sum_{i,k\in J} y m_k \varphi_l( m_k^*  x m_j ) m_j^*=y \tvpl (x ). 
\end{align*}
Hence, $\tvpl$ is a left $\mN$-modular map and a similar computation on the right shows that $\tvpl$ is a $\mN$-bimodular map.

Let us show that $\Vert\tilde\varphi_l\Vcb\leqslant \Vert\varphi_l\Vcb$ for a fixed $l\geqslant 0$.
By the Stinespring Dilation Theorem there exists a normal representation $\pi:\mM\loriar B(H)$ and two continuous linear maps $v,w:L^2(\mM)\loriar H$ such that for any $x\in \mM,$ $\varphi_l(x)=v^*\pi(x)w$ and $\Vert v\Vert=\Vert  w\Vert =\sqrt{\Vert \varphi_l\Vcb}$.
Consider the maps 
$$\tilde v:=\frac{1}{I}\sum_{j\in J} \pi(m_j) v m_j^*\text{ and }\tilde w:=\frac{1}{I}\sum_{j\in J} \pi(m_j) w m_j^*.$$
We observe that $\tvpl(x)=\tilde v^*\pi(x)\tilde w$ for any $x\in\mM$.
Therefore, $\Vert \tvpl\Vcb\leqslant \Vert \tilde v\Vert \Vert \tilde w \Vert.$
Let us show that $\Vert \tilde v \Vert \leqslant \Vert v\Vert$.
Consider the operator 
$$v_2:=\left( \begin{array}{cc} 0 & v\\ v^* & 0 \end{array} \right) \in B(H\oplus L^2(\mM)).$$
This is a self-adjoint operator.
So its norm is equal to its spectral radius and we have 
$$-\Vert v_2\Vert 1\leqslant v_2 \leqslant \Vert v_2 \Vert 1.$$
Note that $\Vert v_2 \Vert = \Vert v \Vert.$
Consider the operator
$$\tilde v_2:=\frac{1}{I}\sum_{j\in J} \left( \begin{array}{cc} \pi(m_j) & 0\\ 0 & m_j \end{array} \right) v_2 \left( \begin{array}{cc} \pi(m_j) & 0\\ 0 & m_j \end{array} \right)^*= \left( \begin{array}{cc} 0 & \tilde v\\ \tilde v^* & 0 \end{array} \right).$$
We have 
 $$-\Vert v\Vert \frac{1}{I}\sum_{j\in J} m_jm_j^*  \leqslant \tilde v_2 \leqslant \Vert v \Vert \frac{1}{I}\sum_{j\in J} m_jm_j^*.$$
But, $\frac{1}{I}\sum_{j\in J} m_jm_j^*=1$.
So $\Vert \tilde v_2\Vert\leqslant \Vert v\Vert$.
But $\Vert \tilde v_2\Vert=\Vert \tilde v\Vert$.
Hence, $\Vert \tilde v\Vert\leqslant \Vert v\Vert$.
We get also that $\Vert \tilde w\Vert\leqslant \Vert w\Vert$. 
Therefore, $\Vert \tvpl\Vcb\leqslant \Vert v\Vert\Vert w\Vert=\Vert\varphi_l\Vcb.$

Let $x\in \mM$.
We have 
\begin{align*}
\Vert \tvpl(x)-x\Vert_2 & = \Vert \frac{1}{I^2}\sum_{i,j\in J}m_i\varphi_l(m_i^* x m_j)m_j^*-x\Vert_2\\
& = \Vert \frac{1}{I^2}\sum_{i,j\in J}(m_i\varphi_l(m_i^* x m_j)m_j^* - m_i m_i^* x m_j m_j^*) \Vert_2\\
& \leqslant  \frac{1}{I^2}\sum_{i,j\in J} \Vert m_i\Vert\Vert m_j \Vert\Vert \varphi_l(m_i^* x m_j) - m_i^* x m_j \Vert_2\loriar_l 0.\\
\end{align*}

Because $J$ is finite and the range of $\varphi_l$ is a bifinite $\mQ$-bimodule we have that the range of $\tvpl$ is a bifinite $\mN$-bimodule.
Therefore, $(\tvpl)_l$ defines a CBAI with constant $C$ for $\mN\subset \mM$.

Proof of $(1)\Rightarrow (3)$.
Consider $(\varphi_l)_l$ a CBAI with constant $C$ for $\mN\subset\mM$.
Define $(\phi_l)_l$, where $\phi_l$ is the restriction of $\phi_l$ to $p\mN p$.
By definition $\varphi_l$ is a $\mN$-bimodular map.
Hence, for any $x\in\mM$, $\varphi_l(pxp)=p\varphi(x)p$.
Therefore, $\phi_l$ defines a map from $p\mM p$ to $p\mM p$.
It is easy to see that $(\phi_l)_l$ is a CBAI with constant $C$ for $p\mN p\subset p \mM p$.

Proof of $(3)\Rightarrow (1)$.
Using the fact that $(1)\Rightarrow (3)$, 
it is sufficient to show that for any $n\geqslant 1$: if $\mN\subset \mM$ admits a CBAI with constant $C$, then $\mN\otimes \mM_n(\C)\subset \mM\otimes\mM_n(\C)$ admits a CBAI with constant $C$.
Let $(\varphi_l)_l$ be a CBAI with constant $C$ for $\mN\subset \mM$.
One can check that $(\varphi_l \otimes 1_{\mM_n(\C)})_l$ is a CBAI with constant $C$ for $\mN\otimes \mM_n(\C)\subset \mM\otimes\mM_n(\C)$.
\end{proof}

Let $\NM$ be a subfactor together with a tunnel-tower $M_j,\ j\in \Z$.
We recall a result due to Popa.

\begin{lemma}\cite[Proposition 2.9 and Proposition 2.10.a]{Popa_symm_env_T}\label{lemma:reduction}
Let $Q$ be an intermediate subfactor $M_i\subset Q\subset N$ where $i$ is an integer.
Then 
$$M\vee M^\op\subset \MboxM\text{ is isomorphic to }M\vee M^\op\subset M\boxtimes_{e_Q}M^\op.$$

Consider the Jones projection $e_1^\op\in M_1^\op\subset M_1\boxtimes_{e_M}M_1^\op.$
Then $$e_1^\op( M_1\vee M_1^\op) e_1^\op \subset e_1^\op( M_1\boxtimes_{e_M}M_1^\op )e_1^\op\text{ is isomorphic to }M_1\vee N^\op\subset \MboxM.$$
\end{lemma}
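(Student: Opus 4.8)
The plan is to read both assertions through Popa's description of the \syenin. Two facts may be taken as given: the bimodule decomposition of Proposition \ref{prop:bimod_decompo}, which says that $L^2$ of a \syenin\ is $\bigoplus_k H_k\otimes\overline H_k^\op$ indexed by the irreducible $M$--$M$ bimodules appearing in the standard invariant; and the fact, underlying Masuda's categorical construction \cite{Masuda_00_LR-construction}, that the \syenin\ $M\vee M^\op\subset\MboxM$ is built from the factor $M$ together with the C$^*$-tensor category $\mathcal C$ of $M$--$M$ bimodules of $N\subset M$ (the even part of the standard invariant). I would fix once and for all the embeddings of the tower and of the tunnel into $S$ coming from Section \ref{sec:notations}, so that the various copies of $M$, $M^\op$ and the Jones projections live in a common algebra and the phrase ``isomorphic inclusion'' can be realized by the identity on $M\vee M^\op$.

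For the first assertion the key point is that $N\subset M$ and $Q\subset M$ generate the same category $\mathcal C$. Since $M_i\subset Q\subset N$, the $M$--$M$ bimodule $L^2(M)\otimes_N L^2(M)$ is a sub-bimodule of $L^2(M)\otimes_Q L^2(M)$, which is a sub-bimodule of $L^2(M)\otimes_{M_i}L^2(M)$; as $M_i$ lies in the tunnel of $N\subset M$, the subfactors $N\subset M$ and $M_i\subset M$ generate the same category of $M$--$M$ bimodules, so the category generated by $Q\subset M$, being squeezed between them, equals $\mathcal C$. In particular $N\subset M$ and $Q\subset M$ share the same family $\{H_k\}$. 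Concretely, if $\{u_t\}$ is a Pimsner-Popa basis of $N$ over $Q$ then $\eN=\sum_t u_t\eQ u_t^*$ lies in $\{M,\eQ\}''\subseteq M\boxtimes_{\eQ}M^\op$ and is a Jones projection for $N\subset M$ satisfying the relations of the \syenin, so $\{M,M^\op,\eN\}''$ is a copy of $\MboxM$ sitting inside $M\boxtimes_{\eQ}M^\op$. This inclusion is trace preserving, and by Proposition \ref{prop:bimod_decompo} applied to both subfactors the two algebras have the same $M\vee M^\op$-bimodule $L^2$-space $\bigoplus_k H_k\otimes\overline H_k^\op$; hence the inclusion is onto and the two \syenin s coincide.

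For the second assertion I would begin with two corner computations. As $M_1$ commutes with $e_1^\op\in M_1^\op$, one has $e_1^\op M_1 e_1^\op=M_1 e_1^\op\cong M_1$; and the basic-construction identity $e_1 M_1 e_1=N e_1$ gives, after passing to opposites, $e_1^\op M_1^\op e_1^\op=N^\op e_1^\op\cong N^\op$. Thus $e_1^\op(M_1\vee M_1^\op)e_1^\op\cong M_1\vee N^\op$, which is the small algebra on the right. It remains to identify the large corner $e_1^\op(M_1\boxtimes_{e_M}M_1^\op)e_1^\op$ with $\MboxM$. I would construct the trace-preserving isomorphism specified on generators by $M_1 e_1^\op\mapsto M_1$, $N^\op e_1^\op\mapsto N^\op$ and $e_M e_1^\op\mapsto e_M$; here $M_1=\langle M,\eN\rangle$ already supplies $M$ and $\eN$ inside $\MboxM$, while the image of the compressed projection $e_M$ is what, together with $N^\op$, recovers the missing mirror copy $M^\op$ through the symmetry relation of the \syenin\ of $M\subset M_1$. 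Checking that this assignment respects the defining Markov and mirror relations and the Markov-normalized trace, so that it extends to an isomorphism of inclusions, is Popa's computation \cite[Proposition 2.10]{Popa_symm_env_T}, which I would follow.

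I expect the genuine obstacle to be this last identification: showing that $M_1$, $N^\op$ and the compressed Jones projection generate \emph{all} of $\MboxM$ --- in particular that $M^\op$ is recovered --- and that the generator assignment respects the relations and the trace rather than merely defining a homomorphism. This is where the index constant $[M:N]$ must be tracked so that the compressed projections remain genuine Jones projections with the correct traces, and where the non-degeneracy of the commuting squares attached to the tunnel is used. By comparison the first assertion is soft once the equality $\mathcal C(Q\subset M)=\mathcal C(N\subset M)$ of bimodule categories is in hand.
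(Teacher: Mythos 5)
The paper itself gives no proof of this lemma: it is recalled verbatim from \cite[Propositions 2.9 and 2.10.a]{Popa_symm_env_T}, so your proposal has to stand on its own, and at the decisive point of each half it does not. For the first assertion, the category-squeezing argument and the final multiplicity-one surjectivity argument are sound, but the sentence claiming that $f:=\sum_t u_t\eQ u_t^*$ ``is a Jones projection for $N\subset M$ satisfying the relations of the \syenin'' is precisely what needs proof. The relations on the $M$-side ($[f,N]=0$, $fxf=E_N(x)f$ for $x\in M$) are standard basic-construction identities, but the relations on the $M^\op$-side are not: $\eQ$ commutes with $Q^\op$ only, so $[f,y^\op]=\sum_t u_t[\eQ,y^\op]u_t^*$ does not vanish termwise for $y^\op\in N^\op$, and nothing in your argument addresses this. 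What rescues the claim is the symmetry $\eQ=\eQ^\op$ inside $M\boxtimes_{\eQ}M^\op$: one must prove $f=\sum_t u_t^\op\,\eQ\,(u_t^\op)^*$ (for instance by checking $\tau(f)=\tau(f^\op)=\tau(ff^\op)=[M:N]^{-1}$), after which the $M^\op$-side relations and the Markov-type condition $E_{M\vee M^\op}(f)=[M:N]^{-1}$ follow; these are exactly the hypotheses of Popa's uniqueness theorem for the \syenal, which your phrase ``is a copy of $\MboxM$'' invokes silently. This is all true and provable, so the first half is a fixable gap; note also that the cleaner route, which you mention as background but never use, is to quote \cite[Theorem 3.4]{Masuda_00_LR-construction}: the \syenin\ depends only on $M$ and the bimodule category, so your category equality finishes the first assertion in one line.

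For the second assertion, the two corner computations $e_1^\op M_1e_1^\op=M_1e_1^\op$ and $e_1^\op M_1^\op e_1^\op=N^\op e_1^\op$ are correct, but they are the easy part, and the crux --- that the large corner is all of a copy of $\MboxM$ --- is handed back to the very reference being proved (``is Popa's computation, which I would follow''). Moreover the sketch you give of that step is wrong in a concrete way: $e_M$ does \emph{not} commute with $e_1^\op$. Indeed $e_Me_1^\op e_M=E_{M^\op}(e_1^\op)\,e_M=[M:N]^{-1}e_M$; if the two projections commuted, this identity would give $e_Me_1^\op=[M:N]^{-1}e_Me_1^\op$, hence $e_Me_1^\op=0$ and then $e_M=0$, which is absurd. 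So ``$e_Me_1^\op$'' is neither self-adjoint nor an element of the corner; the relevant corner element is the projection $[M:N]\,e_1^\op e_Me_1^\op$, and the generator assignment must be rewritten accordingly. In addition, a corner $pSp$ is not generated by the compressions $pxp$ of a generating set of $S$, so even with the corrected projection, the claim that $(M_1\vee N^\op)e_1^\op$ together with it generates $e_1^\op(M_1\boxtimes_{e_M}M_1^\op)e_1^\op$ is itself a statement requiring proof. (Your target-side intuition, on the other hand, is correct and worth making precise: in $\MboxM$ the tower embedding satisfies $e_M=e_0^\op$, the opposite of a tunnel Jones projection, whence $\{N^\op,e_M\}''=\langle N^\op,e_0^\op\rangle=M^\op$; that is exactly how the mirror copy of $M^\op$ reappears.) As it stands, then, the second half of the lemma is located but not established.
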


\begin{coro}\label{coro:Lcb}
Let $Q$ and $P$ be II$_1$ factors such that $$M_{i-1}\subset Q\subset M_i\subset M_j\subset P\subset M_{j+1}$$ for some integers $i<j$.
Then $\Lcb(Q,P)=\Lcb(N,M).$
\end{coro}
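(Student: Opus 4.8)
The plan is to connect the \syenin\ of $Q\subset P$ to that of $\NM$ through a short chain of moves, each of which leaves the CBAI constant of the corresponding \syenin\ (hence $\Lcb$) unchanged. I would first isolate three such moves. First, a \emph{tower shift}: combining the second part of Lemma \ref{lemma:reduction} with the corner and finite-index equivalences of Proposition \ref{prop:CBAI_finite_index}, one gets $\Lcb(M_k,M_{k+1})=\Lcb(N,M)$, and iterating this together with the bottom move below yields $\Lcb(M_a,M_b)=\Lcb(N,M)$ for all integers $a<b$. Second, a \emph{bottom move}: by the first part of Lemma \ref{lemma:reduction}, for a fixed top factor the \syenin\ is unchanged when the smaller factor is replaced by any intermediate subfactor sitting between a tunnel element and itself. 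Third, a \emph{top move}: dually, an intermediate subfactor of the top factor should be absorbed by compressing the \syenin\ with a suitable projection and applying Proposition \ref{prop:CBAI_finite_index}(3), exactly as the Jones projection $e_1^\op$ is used in the second part of Lemma \ref{lemma:reduction}.

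With these moves in hand, I would reduce $Q\subset P$ to a subfactor of the tower in three steps. First apply the bottom move to replace $Q$ by the tower element $M_{i-1}$: since $M_{i-1}\subset Q\subset M_i$ and $i<j$ forces $[P:M_i]\geqslant [M_i:M_{i-1}]$, one can choose a tunnel of $Q\subset P$ with an element contained in $M_{i-1}$, so that $M_{i-1}$ lies between a tunnel element and $Q$; Lemma \ref{lemma:reduction}(1) then gives $\Lcb(Q,P)=\Lcb(M_{i-1},P)$. Then apply the top move to replace $P$ by $M_{j+1}$: the intermediate subfactor $M_j\subset P\subset M_{j+1}$ should determine a projection in the \syenin\ of $M_{i-1}\subset M_{j+1}$ whose compression returns the \syenin\ of $M_{i-1}\subset P$, giving $\Lcb(M_{i-1},P)=\Lcb(M_{i-1},M_{j+1})$. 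Finally the tower shift gives $\Lcb(M_{i-1},M_{j+1})=\Lcb(N,M)$, which is the claim.

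The main obstacle is the top move. The bottom move is literally Lemma \ref{lemma:reduction}(1), but an intermediate subfactor of the top is \emph{not} reached by the bottom move together with the shift: iterating those two operations only produces basic-construction tops, i.e. tower elements, never a genuinely intermediate $P$, because the index $[M_{j+1}:P]$ is preserved under such iterations and obstructs landing on the tower. The way around is to realize $P$ through a projection of the symmetric enveloping algebra and to compress, mirroring the role of $e_1^\op$; the work is to produce this projection from the intermediate-subfactor data of $M_j\subset P\subset M_{j+1}$, to identify the compression of the \syenin\ of $M_{i-1}\subset M_{j+1}$ with the \syenin\ of $M_{i-1}\subset P$, and to check that the compressing projection lies in $M_{j+1}\vee M_{j+1}^\op$ so that Proposition \ref{prop:CBAI_finite_index}(3) applies. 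A secondary technical point is the tunnel-compatibility invoked in the bottom move, which is precisely where the hypothesis $i<j$ is used.
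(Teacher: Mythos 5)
Your bottom move ($Q\to M_{i-1}$ via the first part of Lemma \ref{lemma:reduction}) and your tower shift are exactly two of the three moves the paper uses, but your plan then hinges on a ``top move'' that you never establish: you only list what would have to be done (produce a projection from the data $M_j\subset P\subset M_{j+1}$, identify the compression of the \syenin\ of $M_{i-1}\subset M_{j+1}$ with the \syenin\ of $M_{i-1}\subset P$, and check that the projection lies in $M_{j+1}\vee M_{j+1}^\op$ so that Proposition \ref{prop:CBAI_finite_index}(3) applies). No such dual of the first part of Lemma \ref{lemma:reduction} is available in the paper, and proving it would be a genuine new lemma; as written, the central step of your argument is missing.

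Moreover, the claim you use to argue that a top move is unavoidable --- that iterating the bottom move and the shift ``only produces basic-construction tops, never a genuinely intermediate $P$,'' because the index $[M_{j+1}:P]$ is preserved --- is false, and this is precisely where the paper's proof goes instead. The shift is not restricted to tower pairs: the argument giving $\Lcb(N,M)=\Lcb(M,M_1)$ (second part of Lemma \ref{lemma:reduction}, compression by $e_1^\op$, plus Proposition \ref{prop:CBAI_finite_index}) applies verbatim to \emph{any} subfactor and its basic construction. So one can run it \emph{downward} at $M_{i-1}\subset P$: choose a downward basic construction $L\subset M_{i-1}\subset P$ with $M_r\subset L\subset M_{i-2}$ (possible because $i<j$ forces $[P:M_{i-1}]>[M:N]$), which gives $\Lcb(M_{i-1},P)=\Lcb(L,M_{i-1})$. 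Then $L$ is an intermediate subfactor at the \emph{bottom} of the tower pair $M_{i-2}\subset M_{i-1}$, so the bottom move and Theorem \ref{theo:wa_for_stin} (to handle the parity of the tower) yield $\Lcb(L,M_{i-1})=\Lcb(M_{i-2},M_{i-1})=\Lcb(N,M)$. In other words, the intermediate top factor $P$ is absorbed by flipping it into an intermediate bottom factor via a downward basic construction; no compression-by-a-new-projection lemma is needed. Replacing your unproven top move by this downward step repairs the argument and recovers the paper's proof.
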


\begin{proof}
By Lemma \ref{lemma:reduction}, the compression of $ M_1\vee M_1^\op\subset M_1\boxtimes_{e_M}M_1^\op$ by $e_1^\op$ is isomorphic to $M_1\vee N^\op\subset \MboxM.$
Observe that $M\vee N^\op\subset M_1\vee N^\op$ and $M\vee N^\op\subset M\vee M^\op$ are both finite index subfactors (where everything is viewed in $\MboxM$).
Hence, by Proposition \ref{prop:CBAI_finite_index}, we have 
\begin{equation}\label{equation:NM_MM1}
\Lcb(N,M)=\Lcb(M,M_1).
\end{equation}
Consider a II$_1$ factor $\tilde Q$ such that $M_r\subset \tilde Q\subset M_{k-1}$ for some integers $r<k$.
By Lemma \ref{lemma:reduction}, the \syenin s $M_k\vee M_k^\op\subset M_k\boxtimes_{e_{M_{k-1}}}M_k^\op$ 
and $M_k\vee M_k^\op\subset M_k\boxtimes_{e_{\tilde Q} }M_k^\op$ are isomorphic.
Therefore, $\Lcb(\tilde Q,M_k)=\Lcb(M_{k-1},M_k)$.
Depending on the parity of $k$, the \stin\ of $M_{k-1}\subset M_k$ is isomorphic to $\G_{N,M}$ or to $\G_{M,M_1}$.
Therefore, $\Lcb(M_{k-1},M_k)$ is equal to $\Lcb(N,M)$ or $\Lcb(M,M_1)$ by Theorem \ref{theo:wa_for_stin}.
So by (\ref{equation:NM_MM1}) we have that $\Lcb(M_{k-1},M_k)=\Lcb(N,M)$.
Hence,
\begin{equation}\label{equation:NM_QMj}
\Lcb(N,M)=\Lcb(\tilde Q,M_k),\text{ for any $r<k$ and $\tilde Q$ such that }M_r\subset \tilde Q\subset M_{k-1}.
\end{equation}
Consider a downward basic construction $L\subset M_{i-1}\subset P$ such that $M_r\subset L$ for a certain $r$.
We have $L\subset M_{i-2}$ because $[M_{i-1}:L]=[P:M_{i-1}]>[M_k:M_{k-1}]=[M:N].$
Hence, $\Lcb(L,M_{i-1})=\Lcb(N,M)$ by (\ref{equation:NM_QMj}).
So, $\Lcb(M_{i-1},P)=\Lcb(L,M_{i-1})$ by (\ref{equation:NM_MM1}).
Therefore,
\begin{equation*}\label{equation:NM_MkP}
\Lcb(N,M)=\Lcb(M_{i-1},P).
\end{equation*}
Observe, the \syenin s $P\vee P^\op\subset P\boxtimes_{e_Q} P^\op$ and $P\vee P^\op\subset P\boxtimes_{ e_{ M_{ i - 1 } } } P^\op$ are isomorphic by Lemma \ref{lemma:reduction}.
Therefore, $$\Lcb(Q,P)=\Lcb(M_{i-1},P)=\Lcb(N,M).$$
\end{proof}

\begin{theo}\label{theo:Bisch-Haagerup}
Consider two finite groups that act outerly on a II$_1$ factor, $\sigma:H_1\loriar Aut(P)$ and $\rho:H_2\loriar Aut(P)$.
Denote by $G$ the subgroup of $Out(P)$ generated by $\sigma(H_1)$ and $\rho(H_2)$. 
Consider the Bisch-Haagerup subfactor $P^{H_1}\subset P\rtimes H_2$, where $P^{H_1}$ denotes the elements of $P$ that are fixed under the action of $H_1$. 
Then $\Lcb(P^{H_1},P\rtimes H_2)=\Lcb(G).$
\end{theo}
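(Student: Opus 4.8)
The plan is to reduce the computation of $\Lcb(P^{H_1},P\rtimes H_2)$ to that of a crossed product by $G$, mimicking the description of the \syenin\ for diagonal subfactors in Theorem \ref{theo:diagonal} and then running the same two-sided multiplier argument as in the proof of Corollary \ref{coro:diagonal}. Write $N=P^{H_1}$ and $M=P\rtimes H_2$, and note that $P$ is an intermediate \sufa, $N\subset P\subset M$, with $[P:N]=|H_1|$ and $[M:P]=|H_2|$. First I would set up the Jones tower and a tunnel for $\NM$ and identify the relevant higher relative commutants in terms of the cosets of $H_1$ and $H_2$ inside $G=\langle\sigma(H_1),\rho(H_2)\rangle$, exactly as in the analysis of Bisch and Popa \cite{Bisch_Popa_subfactors_PropT}. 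The passage to the basic construction and the finite groups $H_1,H_2$ contribute only finite-dimensional matrix amplifications and finite-index corners, which by Proposition \ref{prop:CBAI_finite_index} and Corollary \ref{coro:Lcb} do not affect the Cowling-Haagerup constant.

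The technical heart is to show that the \syenin\ $\TS$ of the \biha\ \sufa\ is commensurable, via a finite index inclusion and a compression that preserve the constant by Proposition \ref{prop:CBAI_finite_index}, with a crossed product inclusion $B\subset B\rtimes_\alpha G$. Here $B$ is a \tII\ factor built from $P\ootimes P^\op$ together with the finite data carried by $H_1,H_2$, and $\alpha$ is the action of $G$ induced by a section $g\mapsto\tilde\sigma(g)\otimes\tilde\sigma(g)^\op$ of $G\loriar Out(P\ootimes P^\op)$, possibly twisted by a scalar $2$-cocycle exactly as in the diagonal case of Theorem \ref{theo:diagonal}. I would obtain this identification by adapting the bimodule and relative-commutant analysis of \cite[Section 3]{Bisch_Popa_subfactors_PropT}: the even part of the bimodule category of the \biha\ \sufa\ is controlled by the cosets of $H_1$ and $H_2$ in $G$, so that the $G$-grading on $S$ emerges from fusing the $H_1$- and $H_2$-coloured bimodules, and $S$ is generated by $B$ together with a unitary representation $(u_g)_{g\in G}$ normalising $B$ and implementing $\alpha$. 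This is the step most likely to cause difficulty, since it requires tracking the group law of $G$ through the fusion rules, verifying that the generated \vna\ is exactly the crossed product rather than a proper subalgebra, and correctly separating the finite-group data absorbed into $B$ from the genuinely $G$-graded part.

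Granting this identification, the conclusion follows as in Corollary \ref{coro:diagonal}. For the upper bound, given a sequence of finitely supported $f_l:G\loriar\C$ with $f_l\to 1$ pointwise and $\limsup_l\Vert f_l\Vcb=\Lcb(G)$, the multipliers $\varphi_l:=1\otimes m_{f_l}:B\rtimes G\loriar B\rtimes G$ defined by $\varphi_l(bu_g)=f_l(g)bu_g$ form a CBAI with $\limsup_l\Vert\varphi_l\Vcb=\Lcb(G)$; transporting it through the finite index inclusion and compression via Proposition \ref{prop:CBAI_finite_index} yields $\Lcb(N,M)\leqslant\Lcb(G)$. For the lower bound, given a CBAI $(\varphi_l)_l$ for $\TS$ with $\limsup_l\Vert\varphi_l\Vcb=\Lcb(N,M)$, I would set $f_l(g):=\tau(\varphi_l(u_g)u_g^*)$ and follow the strategy of \cite[Theorem 12.3.10]{Brown_Ozawa_book} to check that $f_l\to 1$ pointwise and $\limsup_l\Vert f_l\Vcb\leqslant\Lcb(N,M)$, giving $\Lcb(G)\leqslant\Lcb(N,M)$. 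Combining the two inequalities gives $\Lcb(P^{H_1},P\rtimes H_2)=\Lcb(G)$, where throughout the independence of $\Lcb$ from the chosen realisation of the \stin\ (Theorem \ref{theo:wa_for_stin}) lets me replace $\NM$ by any convenient finite-index relative without changing the constant.
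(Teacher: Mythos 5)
Your overall strategy (reduce the Bisch-Haagerup subfactor to a diagonal/crossed-product situation and then run the multiplier argument of Corollary \ref{coro:diagonal}) is the same one the paper follows, but your proposal has a genuine gap exactly where you flag it. The ``technical heart'' --- the claim that the \syenin\ $\TS$ of $P^{H_1}\subset P\rtimes H_2$ is, up to finite-index steps and compressions, a crossed product $B\subset B\rtimes_\alpha G$ --- is asserted and sketched but never proved. Re-deriving the $G$-grading of $S$ from the fusion rules, and verifying that the algebra generated by $B$ and the unitaries $(u_g)_{g\in G}$ is exactly the crossed product, is a substantial theorem in its own right (it is essentially a Bisch-Haagerup analogue of Popa's Theorem \ref{theo:diagonal}), and as written your argument rests on this unproved identification.

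The missing idea that closes the gap --- and which makes the paper's proof only a few lines long --- is that no analysis of $S$ is needed at all, because the reduction can be performed at the level of the subfactor rather than of its symmetric enveloping algebra. Write $N=P^{H_1}$, $M=P\rtimes H_2$, and choose II$_1$ factors $P_{-1}$ and $P_1$ such that $P_{-1}\subset N\subset P$ and $P\subset M\subset P_1$ are basic constructions with $M_{-2}\subset P_{-1}$; then Corollary \ref{coro:Lcb} applies directly (with $M_{-2}\subset P_{-1}\subset M_{-1}$ and $M\subset P_1\subset M_1$) and gives $\Lcb(N,M)=\Lcb(P_{-1},P_1)$. Now \cite[Proposition 2.7]{Bisch_Popa_subfactors_PropT} states that $P_{-1}\subset P_1$ is isomorphic, as a subfactor, to the inclusion
$$P_{-1}\subset P_{-1}\otimes B(\ell^2(H_1))\otimes B(\ell^2(H_2)),\qquad x\longmapsto \sum_{h\in H_1,\, k\in H_2}\rho_k\circ\sigma_h(x)\, f_{hh}e_{kk},$$
i.e.\ to a diagonal subfactor associated to an outer action of $G$ with generating family $\{\rho_k\circ\sigma_h,\ h\in H_1,\ k\in H_2\}$. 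At this point Theorem \ref{theo:diagonal} and Corollary \ref{coro:diagonal} apply verbatim: they already contain the crossed-product description of the \syenin\ and both halves of your multiplier argument, yielding $\Lcb(N,M)=\Lcb(P_{-1},P_1)=\Lcb(G)$. So your plan is salvageable, but the unproved commensurability claim about $S$ should be replaced by this subfactor-level isomorphism, which is precisely the content of the Bisch-Popa result you cite.
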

 
\begin{proof}
We follow the proof of \cite[Proposition 2.7]{Bisch_Popa_subfactors_PropT}.
We write $N=P^{H_1}$ and $M=P\rtimes H_2$.
Let $P_{-1}$ and $P_1$ be II$_1$ factors such that $P_{-1}\subset N\subset P$ and $P\subset M\subset P_1$ are basic constructions such that $M_{-2}\subset P_{-1}.$
By Corollary \ref{coro:Lcb}, we have $\Lcb(N,M)=\Lcb(P_{-1},P_1)$.
By \cite[Proposition 2.7]{Bisch_Popa_subfactors_PropT}, we know that $P_{-1}\subset P_1$ is isomorphic to the inclusion 
$$P_{-1}\subset P_{-1}\otimes B(\ell^2(H_1))\otimes B(\ell^2(H_2)),\ x\longmapsto \sum_{h\in H_1, k\in H_2}\rho_k\circ\sigma_h(x) f_{hh}e_{kk},$$
where $\{f_{ij} ,\ i,j\in H_1\}$ and $\{ e_{ij} , \ i , j \in H_2 \}$ are system of matrix units for $B(\ell^2(H_1))$ and $B(\ell^2(H_2))$.
Hence, $P_{-1}\subset P_1$ is isomorphic to a diagonal subfactor associated to an outer action of the group $G$.
By Corollary \ref{coro:diagonal}, we obtain $\Lcb(N,M)=\Lcb(G)$.
\end{proof}

\section{Free product}\label{sec:free_product}

Let $(\G_\al,\ \al\in \A)$ be a finite family of amenable \stin s.
In this section, we prove that the free product in the sense of Bisch and Jones of the $\G_\al$ is a weakly amenable \stin\ with constant 1 \cite{Bisch_Jones_97_Fuss_Catalan,Bisch_Jones_Free_composition}.

Bisch and Jones introduced in \cite{Bisch_Jones_97_Fuss_Catalan,Bisch_Jones_Free_composition} the free product of \stin s and \sufa s.
We recall the principal properties of this construction.
Let $\G_1$ and $\G_2$ be \stin s.
The free product of $\G_1$ and $\G_2$ is a \stin\ denoted by $\G:=\G_1*\G_2$.
There exists a chain of II$_1$ factors $N\subset P\subset M$ such that 
\begin{equation}\label{equa:free_stin}
\G_{N,P}\simeq \G_1,\ \G_{P,M}\simeq \G_2,\text{ and } \GNM\simeq \G_1*\G_2.
\end{equation}
Let $\mathcal C_1=\langle _PL^2(P)\otimes_N L^2(P)_P\rangle$ be the category of $P$-bimodules generated by the bimodule $L^2(P)\otimes_N L^2(P).$
Consider the category $\mathcal C_2=\langle _ML^2(M)_M\rangle$.
We have that $\mathcal C_1$ and $\mathcal C_2$ are in free position inside the category of all $P$-bimodules Bimod$(P)$.
In particular, if $a_1,\cdots,a_n$ are irreducible $P$-bimodules of $\mathcal C_1$ that are different from $L^2(P)$, and if 
$b_1,\cdots,b_n$ are irreducible $P$-bimodules of $\mathcal C_2$ that are different from $L^2(P)$,
then $a_1b_1\cdots a_nb_n$ is an irreducible $P$-bimodule.

We fix two \stin s $\G_1,\G_2$, and a chain of II$_1$ factors $N\subset P\subset M$ that satisfies (\ref{equa:free_stin}).
By definition, $\Lcb(\G)=\Lcb(N,M)$.
Consider a downward basic construction $$M_{-2}\subset N\subset M.$$
By Corollary \ref{coro:Lcb},  
$$\Lcb(M_{-2},P)=\Lcb(N,M).$$
Observe, the bimodule category 
$$\mathcal C=\langle _PL^2(P)\otimes_{M_{-2}}L^2(P)_P\rangle$$ is generated by $\mathcal C_1$ and $\mathcal C_2$.
 
Let $(\G_\al,\ \al\in\A)$ be a finite family of \stin s.
By iteration of the process described above, we define a free product of those \stin s $\G:=*_{\al\in\A} \G_\al.$
It satisfies the following properties:
There exists a \sufa\ $\NM$, a family of \sufa s $(N_\al\subset M_\al,\ \al\in \A)$ and a family of categories $(\mathcal C_\al,\ \al\in \A)$ such that 
\begin{enumerate}\label{jkl}
\item $\Lcb(\G)=\Lcb(N,M)$,\\
\item $\G_{N_\al,M_\al}\simeq\G_\al$,\\
\item $\mathcal C_\al$ is a subcategory of Bimod$(M)$ and is equivalent to the category of $M_\al$-bimodules $$\langle _{M_\al}L^2(M_\al)\otimes_{N_\al} L^2(M_\al)_{M_\al}\rangle,$$
\item the categories $(\mathcal C_\al,\ \al\in \A)$ are in free position inside Bimod$(M)$, and
\item the category $\mathcal C:=\langle_ML^2(M)\otimes_N L^2(M)_M\rangle$ is generated by the subcategories 
$(\mathcal C_\al,\ \al\in \A).$ 
\end{enumerate}

We will show that the \syenin\ $\TS$ associated to $\NM$ is isomorphic to an amalgamated free product over $T$.
For this purpose, we recall a construction due to Masuda that is an extension of the Longo-Rehren construction for infinite depth \sufa s of \tII\ \cite{Masuda_00_LR-construction,Longo_Rehren_95_Nets_sf}.

Consider a subfactor $\NM$.
We put $A=M\ootimes M^\op$. 
Let $\{H_k,\ k\in K\}$ be a set of representatives of the isomorphism classes of irreducible $M$-bimodules that appear in the Jones tower of $\NM$.
Denote by $B_k:=H_k\otimes \overline {H_k}^\op$ the associated $A$-bimodule and by $X$ their orthogonal direct sum $\bigoplus_{k\in K}B_k$.
For any $i,j,k\in K$, we denote by $N_{ij}^k$ the dimension of the space of intertwiners from $B_i\otimes_A B_j$ into $B_k$, 
$$\text{i.e. }N_{ij}^k:=\dim Hom_{A-A}(B_i\otimes_A B_j,B_k).$$
Let $d_k$ be the square root of the index of $M$ inside the right $M$-modular morphisms of $B(H_k)$, $d_k:=\sqrt{[End_{-M}(H_k):M]}$, and let $\{V_{ijk}^e,\ e = 1,\cdots, N_{ij}^k\}$ be a choice of an orthonormal basis of the space of intertwiners $Hom_{A-A}(B_i\otimes_A B_j,B_k)$, for any $i,j,k\in K$.
Define $$\tilde V_{ijk}:=\sum_{e=1}^{N_{ij}^k}\sqrt{ \dfrac{ d_i d_j}{d_k} } V_{ijk}^e\otimes \overline{ V_{ijk}^e     }^\op, \text{ for any }i,j,k\in K. $$
Consider a bounded vector $\xi\in B_i^\bdd$ and a vector $\eta\in B_j$.
We define $$\lambda_\xi(\eta):= \sumk \tilde V_{ijk}(\xi\otimes\eta).$$
By \cite[Lemma 2.2]{Masuda_00_LR-construction}, $\lambda_\xi$ extends to a bounded linear map from $X$ to $X$ that we continue to denote by $\lambda_\xi.$
Denote by $A(K)$ the von Neumann subalgebra of $B(X)$ generated by the set 
$$\{\lambda_\xi,\ \xi\in B_k^\bdd,\ k\in K\}.$$
By \cite[Theorem 3.4]{Masuda_00_LR-construction}, $A\subset A(K)$ is isomorphic to the \syenin\ $\TS$ associated to the subfactor $\NM$.

Let $\G=*_{\al\in \A}\G_\al$ be the free product of a finite family of extremal \stin s.
This is still an extremal \stin.
Denote by $\NM$ the \sufa\ described above that satisfies $\Lcb(N,M)=\Lcb(\G)$.
Let $K$ be the index set of the even vertices of the dual principal graph of $\NM$ and $\{ H_k,\ k\in K\}$, the associated set of irreducible $M$-bimodules.
For any $\al\in \A$, let $K_\al\subset K$ be the set of indices corresponding to the irreducible $M$-bimodules of the category $\mathcal C_\al$.
Observe, $K$ is the set of finite words with letter in the different $K_\al,\ \al\in \A$.
Let $A\subset A(K)$ be the inclusion defined by Masuda and let $A(K_\al)$ be the von Neumann subalgebra of $A(K)$ generated by the set 
$$\{ \lambda_\xi,\ \xi\in B_k^\bdd,\ k\in K_\al\}.$$
For any $\al\in \A$ there is a unique normal trace-preserving conditional expectation $E^{A(K_\al)}_A:A(K_\al)\loriar A.$
By construction and \cite[Theorem 3.4]{Masuda_00_LR-construction}, we have that $A\subset A(K_\al)$ is isomorphic to a \syenin\ associated to a subfactor of $M$ with \stin\ isomorphic to $\G_\al$.
Consider the free product of the $A(K_\al), \al\in \A$ with amalgamation over $A$ with respect to the conditional expectations $E^{A(K_\al)}_A$, see \cite[section 3.8]{Voiculescu_dykema_nica_Free_random_variables}.
We denote this \vna\ by $*_{\al\in\A} ( A(K_\al),\ E^{A(K_\al)}_A).$
There is a canonical inclusion 
$$A\subset *_{\al\in\A} ( A(K_\al),\ E^{A(K_\al)}_A).$$

\begin{prop}\label{prop:free_prod}
The two inclusions $A\subset A(K)$ and $A\subset  *_{\al\in\A} ( A(K_\al),\ E^{A(K_\al)}_A)$ are isomorphic.
\end{prop}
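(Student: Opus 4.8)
The plan is to identify both von Neumann algebras with the same concrete algebra of operators on the Masuda Hilbert space $X=\bigoplus_{k\in K}B_k$, by exploiting the combinatorial description of the index set $K$ as the set of reduced words in the alphabets $K_\al$. The key structural input is property (4) above: the categories $\mathcal C_\al$ are in free position inside Bimod$(M)$, so that whenever $k=w_1\cdots w_n$ is a reduced word with letters $w_i\in K_{\al_i}$ and consecutive letters lying in distinct alphabets, the bimodule $B_k$ is the irreducible summand occurring with multiplicity one in the fusion product $B_{w_1}\otimes_A\cdots\otimes_A B_{w_n}$. I would first make precise the decomposition of $X$ induced by the word-length grading on $K$, and identify the trivial word (the index $k_0$ of $L^2(T)$) with the vacuum.

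The main steps are as follows. First I would check that the canonical map $A\loriar \ast_{\al\in\A}(A(K_\al),E^{A(K_\al)}_A)$ sends $A$ into $A(K)$ compatibly, using that each $A(K_\al)\subset A(K)$ is the von Neumann subalgebra generated by the $\lambda_\xi$ with $\xi\in B_k^\bdd$, $k\in K_\al$, and that these all contain the common copy of $A$. Second, and this is the heart of the argument, I would verify the freeness-with-amalgamation condition intrinsically on $A(K)$: writing $A(K_\al)^\circ:=\ker E_A^{A(K_\al)}$, I must show that for any alternating word $x_1\cdots x_n$ with $x_i\in A(K_{\al_i})^\circ$ and $\al_i\neq\al_{i+1}$, the trace $\tau(x_1\cdots x_n)$ vanishes. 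By linearity and normality it suffices to test this on generators $\lambda_\xi$ with $\xi\in B_k^\bdd$, $k\in K_{\al_i}$, where centering against $A$ forces $k\neq k_0$. The freeness of the categories then guarantees that applying such an alternating product to the vacuum vector produces a vector supported on the reduced word $w=w_1\cdots w_n$, which has strictly positive length and hence is orthogonal to the vacuum; this yields vanishing of the trace. Third, I would argue that $A(K)$ is generated by the $A(K_\al)$: every $B_k$ with $k=w_1\cdots w_n$ appears inside $B_{w_1}\otimes_A\cdots\otimes_A B_{w_n}$ by property (5), so the corresponding $\lambda$-operators lie in the algebra generated by the $A(K_\al)$. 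Together with the freeness this gives, by the universal property of the amalgamated free product, an isomorphism intertwining the two inclusions of $A$.

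The hard part will be the freeness computation, specifically translating the categorical free position of the $\mathcal C_\al$ into the operatorial statement that alternating centered products kill the vacuum up to higher-length components. The delicate point is bookkeeping with the intertwiner coefficients $\tilde V_{ijk}$ and the normalizing factors $\sqrt{d_id_j/d_k}$ in Masuda's construction: one must check that the leading-length term of $\lambda_{\xi_1}\cdots\lambda_{\xi_n}$ acting on the vacuum is exactly the image in the reduced-word summand $B_w$ and is nonzero, while all lower-length corrections are themselves centered. Here I would invoke \cite[Lemma 2.2 and Theorem 3.4]{Masuda_00_LR-construction} to control the action of $\lambda_\xi$ on $X$, and the irreducibility and multiplicity-one statements from free position to ensure the top component does not collapse. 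Once the matrix-coefficient computation against the vacuum is organized along the word-length filtration of $X$, both the freeness and the generation statements follow, and the isomorphism of inclusions is then immediate from the universal property.
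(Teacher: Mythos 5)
Your proposal is correct and follows essentially the same route as the paper: both reduce to showing that the subalgebras $A(K_\al)$ generate $A(K)$ and are free with amalgamation over $A$, both pass to the weakly dense $*$-algebras spanned by the operators $\lambda_\xi$ (the paper via \cite[Proposition 2.5.7]{Voiculescu_dykema_nica_Free_random_variables} adapted to the amalgamated setting, you via ``linearity and normality''), and both conclude by combining Masuda's fusion formula $\lambda_\xi\circ\lambda_\eta=\sumk\lambda_{\tilde V_{ijk}(\xi\otimes\eta)}$ with the free position of the categories $\mathcal C_\al$ to see that an alternating centered product has no component on the trivial index $k_0$. Your vacuum-vector/word-length formulation is just a Hilbert-space rephrasing of the paper's argument that $\supp(a_1\cdots a_n)$ misses the index of $L^2(A)$.
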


\begin{proof}
One can see that the family von Neumann subalgebras $(A(K_\al), \al\in\A)$ generates $A(K)$ as a \vna. 
Hence it is sufficient to show that they are free with amalgamation over $A$. 
Let $D$ be the $*$-subalgebra of $B(X)$ generated by the set $\{ \lambda_\xi, \ \xi\in B_k^\bdd,\ k\in K\}$.
Similarly, for any $\al\in\A$, we define the $*$-subalgebra $D_\al\subset B(X)$ which is generated by the set $\{ \lambda_\xi, \ \xi\in B_k^\bdd,\ k\in K_\al\}$.
The $*$-subalgebra $D_\al$ is weakly dense in $A(K_\al)$.
By \cite[Proposition 2.5.7]{Voiculescu_dykema_nica_Free_random_variables}, adapted in the amalgamated case, it is sufficient to prove that the $D_\al, \al\in\A$ are free with amalgamation over $A$.
Consider $\xi\in B_i^\bdd$ and $\eta\in B_j^\bdd$ where $i,j\in K$.
We have that 
\begin{equation}\label{equa:Masuda}
\lambda_\xi\circ\lambda_\eta=\sumk \lambda_{\xi_k}, 
\end{equation}
where $\xi_k=\tilde{V_{ijk}}(\xi\otimes \eta)$, see \cite[Proposition 2.3]{Masuda_00_LR-construction}.
Hence, for any $a\in D$ there exists a unique decomposition $a=\sumk \lambda_{\xi_k}$, such that $\xi_k\in B_k^\bdd$.
This sum is necessarily finite.
Denote by $\supp(a)$ the set of $k\in K$ such that $\lambda_{\xi_k}\neq 0$.
Consider some indices $\al_1,\cdots,\al_n\in \A$ such that $\al_1\neq \al_2\neq\cdots\neq \al_n.$
For any $i=1,\cdots,n$, consider $a_i\in D_{\al_i}\ominus A$.
Let us show that $E_A(a_1\cdots a_n)=0$, where $E_A:A(K)\loriar A$ is the unique normal trace-preserving conditional expectation.
We need to show that $\supp(a_1\cdots a_n)$ does not contain the index of the trivial bimodule $L^2(A)$.
But this is an immediate consequence of the equality (\ref{equa:Masuda}).
\end{proof}

\begin{remark}
Consider the following enlightening examples given by diagonal subfactors.
Let $\A=\{1,\cdots,m\}$ and $G_1,\cdots,G_m$ be a family of finitely generated groups.
For any $\al$ consider a diagonal subfactor $N_\al\subset M_\al$ associated to the group $G_\al$, see the construction of section \ref{sec:diagonal}.
Denote by $\mathcal G_\al$ its standard invariant and by $\G$ the free product of all the $\G_\al$.
Then the inclusion $A\subset A(K)$ considered above is isomorphic to an inclusion in a crossed product $A\subset A\rtimes G$, where $G=G_1*\cdots*G_m$.
The inclusion $A\subset A(K_\al)$ is isomorphic to an inclusion in a crossed product $A\subset A\rtimes G_\al$ for any $\al\in\A$.
\end{remark}

\begin{theo}\label{theo:free_product}
Consider a finite family of amenable \stin s $(\G_\al,\ \al\in\A)$.
Consider the free product of those standard invariants $\G:=*_{\al\in\A}\G_\al$.
Then $$\Lcb(\G)=1.$$
\end{theo}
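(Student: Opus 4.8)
The plan is to exploit the amalgamated free product description of the \syenin\ furnished by Proposition \ref{prop:free_prod}, together with the Khintchine-type estimates of Ricard and Xu \cite{Ricard_Xu_06_Khintchine}. Fix the \sufa\ $\NM$ with $\Lcb(\G)=\Lcb(N,M)$ constructed above, and write $A=M\ootimes M^\op\subset A(K)$ for its \syenin\ in the Masuda model, so that $T=A$ and $S=A(K)$. By Proposition \ref{prop:free_prod} this inclusion is isomorphic to $A\subset *_{\al\in\A}(A(K_\al),E_A^{A(K_\al)})$, and each $A\subset A(K_\al)$ is the \syenin\ of a \sufa\ with \stin\ isomorphic to the amenable $\G_\al$. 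By the free position of the categories $\mathcal C_\al$, the irreducible $M$-bimodules of $\mathcal C$ are exactly the alternating products of non-trivial irreducibles of the $\mathcal C_\al$, so I identify $K$ with the set of reduced words over the disjoint union of the $K_\al$; I write $|k|$ for the block length of $k\in K$ and $P_m$ for the orthogonal projection of $L^2(S)$ onto the span of the $L^2(\spann Tv_kT)$ with $|k|=m$.

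First I would produce approximating maps on the factors. Since $\G_\al$ is amenable, Remark \ref{rem:amenable2} gives finitely supported scalar functions $c_l^\al:K_\al\loriar\C$ with $c_l^\al(k)\loriar_l 1$ for every $k$ and $c_l^\al(k_0)=1$ on the trivial bimodule, such that $\psi_l^\al:=\sum_{k\in K_\al}c_l^\al(k)a_k^\al$ is a \ucp, $A$-bimodular map on $A(K_\al)$ preserving $E_A^{A(K_\al)}$, with bifinite range. I would then form their free product: by the free product construction for \ucp\ bimodular expectation-preserving maps (in the form used in \cite{Ricard_Xu_06_Khintchine}) there is a \ucp, $A$-bimodular map $\Psi_l:=*_{\al}\psi_l^\al$ on $A(K)$ preserving $E_A$. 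Using the composition rule (\ref{equa:Masuda}) for the $\lambda_\xi$ together with freeness, its associated scalar function on $K$ is multiplicative over blocks, $c_l(k_1\cdots k_m)=\prod_i c_l^{\al_i}(k_i)$; in particular $c_l(k)\loriar_l 1$ for every $k\in K$ and $\Vert\Psi_l\Vcb=1$.

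The map $\Psi_l$ is not of bifinite range, since arbitrarily long words survive, so I would smooth and truncate it with block-radial multipliers. For $0<r<1$ let $m_r$ be the $T$-bimodular map with scalar function $k\longmapsto r^{|k|}$; by \cite{Ricard_Xu_06_Khintchine} this is a unital completely positive, hence completely contractive, multiplier, and the block projections satisfy a Haagerup-type bound $\Vert P_m\Vcb\leqslant p(m)$ with $p$ of polynomial growth. Setting $\varphi_{r,l,n}:=\sum_{|k|\leqslant n}r^{|k|}c_l(k)a_k=\big(\sum_{m\leqslant n}P_m\big)\circ m_r\circ\Psi_l$, the support is finite, since each block ranges over the finite support of the corresponding $c_l^\al$ and the length is at most $n$; hence $\varphi_{r,l,n}$ is a normal $T$-bimodular map of bifinite range. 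The crucial estimate is then
\begin{equation*}
\Vert m_r\circ\Psi_l-\varphi_{r,l,n}\Vcb\leqslant\sum_{m>n}r^m\Vert P_m\circ\Psi_l\Vcb\leqslant\sum_{m>n}r^m p(m)\loriar_n 0,
\end{equation*}
so that $\limsup_n\Vert\varphi_{r,l,n}\Vcb\leqslant\Vert m_r\circ\Psi_l\Vcb\leqslant 1$ for each fixed $r$ and $l$.

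Finally I would let $r\to 1$, $l\to\infty$ and $n\to\infty$ along a diagonal sequence. For a fixed $k\in K$ the value $r^{|k|}c_l(k)$ tends to $1$, so the chosen functions converge pointwise to $1$; they are finitely supported and their cb norms are eventually $\leqslant 1+\varepsilon$. Since each approximant fixes the trivial bimodule (it sends $1$ to $1$), its cb norm is at least $1$, and Proposition \ref{prop:scalar_functions}(3) then yields a CBAI for $\TS$ of constant $1$, whence $\Lcb(N,M)=1$ and $\Lcb(\G)=1$. The main obstacle is the analytic input of the third paragraph: proving, inside the amalgamated free product over $A$, that the radial multiplier $m_r$ is completely contractive and that the block projections $P_m$ are completely bounded with at most polynomial growth. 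This is precisely the operator-algebraic analogue of Haagerup's inequality and is the role played by the Khintchine inequalities of Ricard and Xu; everything else is structural bookkeeping on top of Masuda's model and Proposition \ref{prop:free_prod}.
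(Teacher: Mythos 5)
Your proposal is correct and follows essentially the same route as the paper: the Masuda model and Proposition \ref{prop:free_prod}, the u.c.p.\ expectation-preserving maps on each $A(K_\al)$ supplied by Remark \ref{rem:amenable2}, their free product (the paper cites \cite[Theorem 4.8.5]{Brown_Ozawa_book} for this), a length truncation controlled by the Ricard--Xu estimates, and finally Proposition \ref{prop:scalar_functions} applied to the scalar functions $C_{N,l}(k)$. The only difference is presentational: the paper invokes the truncated Poisson kernel $\mathcal T_N$ of \cite[Proposition 3.5]{Ricard_Xu_06_Khintchine} (in its amalgamated form) as a black box, whereas you re-derive its cb-norm estimate from the complete positivity of the radial multiplier $m_r$ together with the polynomial Khintchine bounds on the block projections $P_m$ --- which are exactly the ingredients of that proposition.
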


\begin{proof}
Consider a finite family of amenable \stin s $(\G_\al,\ \al\in\A)$. 
For any $\al\in\A$, let $N_\al\subset M_\al$ be an amenable \sufa\ with \stin\ $\G_\al$.
By \cite{Bisch_Jones_97_Fuss_Catalan,Bisch_Jones_Free_composition}, and the discussion at the beginning of this section, there exists a \sufa\ $\NM$ such that 
\begin{itemize}
\item $\Lcb(N,M)=\Lcb(*_{\al\in\A}\G_\al)$,\\ 
\item its \syenin\ is isomorphic to $A\subset  *_{\al\in\A} ( A(K_\al),\ E^{A(K_\al)}_A),$ where $A\subset A(K_\al)$ is isomorphic to the \syenin\ of $N_\al\subset M_\al$ for any $\al\in\A.$
\end{itemize}
By remark \ref{rem:amenable2}, for any $\al\in\A$ there exists a sequence of normal unital completely positive $A$-bimodular maps $\varphi_{l}^\al:A(K_\al)\loriar A(K_\al)$ such that the $A$-bimodule $\varphi_{l}^\al (A(K_\al))$ is bifinite, $E^{A(K_\al)}_A\circ \varphi_{l}^\al =E^{A(K_\al)}_A$, and $\lim_l\Vert \varphi_{l}^\al(x)-x\Vert_2=0$ for any $x\in A(K_\al)$.
The restrictions of those maps to $A$ are equal to the identity by Proposition \ref{prop:scalar_functions} and the fact that they are unital.
By \cite[Theorem 4.8.5]{Brown_Ozawa_book}, there exists a normal $A$-bimodular \ucp\ map $\varphi_l:A(K)\loriar A(K)$ for any $l\gO$ that satisfies 
$$\varphi_l(a_1\cdots a_d)=\varphi_l^{\al_1}(a_1)\cdots\varphi_l^{\al_d}(a_d) \text{ for any } a_i\in A(K_{\alpha_i}) \text{ such that } \al_1\neq\al_2\neq\cdots\neq\al_d.$$
Following \cite{Ricard_Xu_06_Khintchine}, consider the Poisson kernel 
$$\mathcal T_N:=\sum_{d=0}^N (1-\frac{1}{\sqrt N})^d \mathcal P_d,$$
where $\mathcal P_d$ is the projection from $A(K)$ onto the operator space spanned by the words of length $d$.
By \cite[Proposition 3.5]{Ricard_Xu_06_Khintchine} adapted in the amalgamated case as explained in section 5 of the same article, we have that $\mathcal T_N$ is normal and completely bounded such that $\lim_{N\rightarrow\infty} \Vert \mathcal T_N\Vcb=1.$
We define the map $\phi_{N,l}:=\varphi_l\circ \mathcal T_N$.
This a normal $A$-bimodular completely bounded map such that the $A$-bimodule $\phi_{N,l}(A(K))$ is bifinite.
Further, we have $\limsup_{N,l}\Vert \phi_{N,l}\Vcb=1.$
In order to conclude, we need to show that 
\begin{equation}\label{equ:limite}
\lim_{N,l}\Vert\phi_{N,l}(x)-x\Vert_2=0 \text{ for any } x\in A(K).
\end{equation}
Let $C_{N,l}$ (resp. $c_l^\al$) be the scalar valued function associated to $\phi_{N,l}$ (resp. $\varphi_l^\al$) for any $N,l\gO$ and $\al\in\A.$
Consider an element $k\in K$.
The set $K$ is equal to the set of finite words with alternating letters in the $K_\al,\ \al\in\A$.
Hence, there exists $d\gO$, $\al_1\neq\al_2\neq\cdots\neq\al_d\in\A$, and $k_1\in K_{\al_1},\cdots,k_d\in K_{\al_d}$ such that $k=k_1\cdots k_d$.
Observe, if $N\geqslant d$ we have that 
$$C_{N,l}(k)=(1-\frac{1}{\sqrt N})^d c_l^{\al_1}(k_1)\cdots c_l^{\al_d}(k_d).$$
Since the sequence $ ( \varphi^{\al_i}_l ,\ l\gO)$ is a CBAI, by Proposition \ref{prop:scalar_functions} we have that $c_l^{\al_i}(k_i)\loriar_l1$ for any $1\leqslant i\leqslant d$.
Therefore, $C_{N,l}(k)\loriar_{N,l}1$.
Hence, by Proposition \ref{prop:scalar_functions}, we obtain (\ref{equ:limite}).
\end{proof}

\section{Tensor product}\label{sec:tensor_product}

Consider a finite family of extremal \stin s $(\G_\al=\{A^\al_{ij},\ i,j\geqslant -1\},\ \al\in\A)$.
One can form their tensor product 
$$\bigotimes_{\al\in\A}\G_\al=\{\otimes_{\al\in\A}A^\al_{ij},\ i,j\geqslant -1\},$$ 
which is still an extremal standard invariant.
This section is devoted to the proof of the following theorem:

\begin{theo}\label{theo:tensor_product}
If $(\G_\al,\ \al\in\A)$ is a finite family of (extremal) \stin s, then
$$\Lcb(\bigotimes_{\al\in\A}\G_\al)=\prod_{\al\in\A}\Lcb(\G_\al).$$
\end{theo}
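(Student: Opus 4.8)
The plan is to prove the two inequalities separately, reducing everything to completely bounded maps on the symmetric enveloping algebras via Proposition \ref{prop:scalar_functions}. First I would realize each tensor factor concretely: for every $\al\in\A$, choose a \sufa\ $N_\al\subset M_\al$ with \stin\ $\G_\al$, and form the tensor product \sufa\ $\bigotimes_\al N_\al\subset\bigotimes_\al M_\al$, whose \stin\ is $\bigotimes_\al\G_\al$. The crucial structural observation is that the \syenin\ of the tensor product \sufa\ should be (isomorphic to) the tensor product of the individual \syenin s: if $T_\al\subset S_\al$ denotes the \syenin\ of $N_\al\subset M_\al$, then $\bigotimes_\al T_\al\subset\bigotimes_\al S_\al$ is the \syenin\ of the tensor product \sufa. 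This follows because the even vertices $K$ of the dual principal graph of the tensor product factor as $K=\prod_\al K_\al$, and the irreducible $M$-bimodules decompose as outer tensor products $H_k=\bigotimes_\al H_{k_\al}$; consequently the projections $s_k$ of Proposition \ref{prop:bimod_decompo} factor as $s_k=\bigotimes_\al s_{k_\al}$, so the maps $a_k$ of Proposition \ref{prop:scalar_functions} satisfy $a_k=\bigotimes_\al a_{k_\al}$.

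For the inequality $\Lcb(\bigotimes_\al\G_\al)\leqslant\prod_\al\Lcb(\G_\al)$, I would take, for each $\al$, a CBAI $(\varphi_l^\al)_l$ for $T_\al\subset S_\al$ with $\limsup_l\Vert\varphi_l^\al\Vcb$ arbitrarily close to $\Lcb(\G_\al)$, encoded by finitely supported scalar functions $c_l^\al:K_\al\loriar\C$ with $c_l^\al(k_\al)\loriar_l1$. The tensor product map $\varphi_l:=\bigotimes_\al\varphi_l^\al$ on $\bigotimes_\al S_\al$ corresponds to the scalar function $c_l(k)=\prod_\al c_l^\al(k_\al)$, which is finitely supported and converges pointwise to $1$. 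Since the completely bounded norm is multiplicative under tensor products of normal completely bounded maps, $\Vert\varphi_l\Vcb=\prod_\al\Vert\varphi_l^\al\Vcb$, and by Proposition \ref{prop:scalar_functions}(3) this exhibits a CBAI for the tensor product \syenin\ with constant $\prod_\al\Lcb(\G_\al)$.

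For the reverse inequality $\Lcb(\bigotimes_\al\G_\al)\geqslant\prod_\al\Lcb(\G_\al)$, I would adapt the group-theoretic argument of Cowling and Haagerup. Fix one index $\beta\in\A$. Given a CBAI for the full tensor product \syenin\ realized by scalar functions $c_l:\prod_\al K_\al\loriar\C$, I would freeze all coordinates except the $\beta$-th by restricting to a fixed vertex in each $K_\al$ for $\al\neq\beta$ (for instance, by precomposing and postcomposing with the slice maps coming from the conditional expectations onto the relevant tensor factor, which do not increase the cb-norm and preserve the tensor structure). This produces a CBAI for $T_\beta\subset S_\beta$ whose scalar function is the corresponding slice of $c_l$; the key point is that slicing a completely bounded map on a tensor product against a normal state on the other factors yields a completely bounded map on the remaining factor with cb-norm no larger. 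Iterating over all coordinates and combining the resulting cb-norm estimates multiplicatively gives $\prod_\al\Lcb(\G_\al)\leqslant\limsup_l\Vert\varphi_l\Vcb$, hence the claimed lower bound.

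The main obstacle will be establishing cleanly that the \syenin\ of the tensor product \sufa\ is the tensor product of the individual \syenin s, together with the compatible factorization $a_k=\bigotimes_\al a_{k_\al}$ of the bimodular building blocks; once this identification is in place, the cb-norm multiplicativity on one side and the slicing estimate on the other are standard operator-space facts. Care is needed in the lower bound to ensure the slicing maps genuinely restrict to the correct \syenin\ and respect the bimodularity over $T_\beta$, so that Proposition \ref{prop:scalar_functions} applies and the extracted function is exactly the slice of $c_l$.
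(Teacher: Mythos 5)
Your structural identification (the \syenin\ of the tensor product \sufa\ is $T_1\ootimes T_2\subset S_1\ootimes S_2$, with $K=\prod_\al K_\al$ and $a_k=\bigotimes_\al a_{k_\al}$) and your proof of the upper bound $\Lcb(\bigotimes_\al\G_\al)\leqslant\prod_\al\Lcb(\G_\al)$ by tensoring CBAIs both match the paper. The gap is in your lower bound. Freezing all coordinates except the $\beta$-th by slicing a CBAI $(\varphi_l)_l$ of the tensor product \syenin\ produces, for each fixed $\beta$, a CBAI for $T_\beta\subset S_\beta$ with constant at most $\limsup_l\Vert\varphi_l\Vcb$; but every one of these slices is bounded by the \emph{same} quantity, so what you actually obtain is $\max_\al\Lcb(\G_\al)\leqslant\Lcb(\bigotimes_\al\G_\al)$, not the product. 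The step ``iterating over all coordinates and combining the resulting cb-norm estimates multiplicatively'' has no justification: there is no mechanism in the slicing argument that multiplies these bounds, and this is precisely the difficulty that the Cowling--Haagerup argument was designed to overcome. Concretely, take $\G_1=\G_2$ to be the \stin\ of a diagonal \sufa\ built from a lattice $\Gamma$ in $Sp(n,1)$, $n\geqslant 2$, so that $\Lcb(\G_i)=2n-1$ by Corollary \ref{coro:diagonal}; the theorem asserts $\Lcb(\G_1\otimes\G_2)=(2n-1)^2$, whereas your slicing argument can only ever certify the lower bound $2n-1$.

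The paper closes this gap with a Hahn--Banach duality argument (Lemma \ref{lemma:tensor_product}, adapted from \cite[Theorem 12.3.13]{Brown_Ozawa_book}), arguing by contradiction. If $\Lcb(\G)<D_1D_2$ with $D_i<\Lcb(\G_i)$, then by part (1) of the lemma there are finite sets $F_i\subset K_i$ admitting \emph{no} finitely supported function $c_i$ with $c_i\equiv 1$ on $F_i$ and $\Vert\varphi_{c_i}\Vcb\leqslant D_i$; by the duality in part (2) this failure is witnessed by vectors $u^i\in\C F_i$ with $\Vert u^i\Vert_{\CB(S_i,E_i)^*}=1$ but $\vert\sum_{k_i\in F_i}u^i_{k_i}\vert>D_i$. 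Part (3) shows the dual norm is submultiplicative, so $u=u^1\times u^2$ has $\Vert u\Vert_{\CB(S,E_1\times E_2)^*}\leqslant 1$ while its pairing exceeds $D_1D_2$, contradicting part (2) applied to the function $c$ on $K_1\times K_2$ that exists by hypothesis with constant $D_1D_2$. It is this passage to the dual space, which converts the non-existence of good multipliers on each factor into functionals that can be multiplied, that your proposal is missing; without it the multiplicative lower bound cannot be reached.
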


The group case and the \vna\ case have been respectively proved by Cowling and Haagerup \cite{Cowling-Haagerup-89-completely-bounded-mult}, and by Sinclair and Smith \cite{Sinclair_Smith_95_Haagerup_inv}.
We adapt the proof given in \cite[Theorem 12.3.13]{Brown_Ozawa_book}.

Let $\TS$ be a \syenin\ and let $K$ be the index set of the irreducible $T$-bimodules as denoted in the previous sections.
Consider some finite subsets $F\subset E\subset K$.
Denote by 
$$\CB(S,E):=\{\sum_{k\in E}c(k)a_k,\ c(k)\in\C\}$$ 
the operator space of \cobo\ $T$-bimodular maps $\varphi:S\loriar S$ such that the range of $\varphi$ is contained in $\sum_{k\in E}L^2(\spann Tv_kT)$, see notations of Proposition \ref{prop:scalar_functions}.
Consider the free vector space $\C F$ with basis $\{\delta_k,\ k\in F\}$.
We have an inclusion of $\C F$ in the topological dual of the (finite dimensional) Banach space $\CB(S,E)$ given by 
$$u(\varphi_c)=\sum_{k\in F} u_kc(k),$$
where 
$$\varphi_c=\sum_{k\in E}c(k)a_k\in\CB(S,E)\text{ and }u=\sum_{k\in F} u_k\delta_k\in\C F.$$
We identify $\C F$ and its image in the dual space $\CB(S,E)^*$.
We write $\Vert \cdot\Vert_{\CB(S,E)^*}$ the norm of the dual space $\CB(S,E)^*$.
Here are three key observations. 
We omit the proof that are straightforward adaptation of the one given in \cite{Brown_Ozawa_book}.

\begin{lemma}\cite[Lemma 12.3.14 and 12.3.16]{Brown_Ozawa_book}\label{lemma:tensor_product}
\begin{enumerate}
\item The inclusion $\TS$ admits a CBAI of constant D if and only if for any finite subset $F\subset K$ there exists a finitely supported scalar valued function $c:K\loriar\C$ such that 
$c(k)=1$ for any $k\in F$ and $\Vert \varphi_c\Vcb\leqslant D$, where $\varphi_c=\sum_{k\in\supp(c)} c(k)a_k.$\label{lem:1}\\
\item Consider some finite subset $F\subset E\subset K$.
There exists a scalar valued function 
$$c:K\loriar\C\text{ such that }\supp(c)\subset E,\ \Vert\varphi_c\Vcb\leqslant D\text{, and }c(k)=1\text{ for any }k\in F$$
if and only if 
$$\text{for any }u=\sum_{k\in F} u_k\delta_k\in \C F,\  \vert\sum_{k\in F}u_k\vert\leqslant D\Vert u\Vert_{\CB(S,E)^*}.$$\label{lem:2}\\
\item Let $T_i\subset S_i,\ K_i,\ i=1,2$ be \syenin s together with their index set of irreducible $T_i$-bimodules. 
Consider the tensor product $\TS=T_1\ootimes T_2\subset S_1\ootimes S_2$ with the index set $K=K_1\times K_2$.
For any $u^i=\sum_{k_i\in F_i}u^i_{k_i}\in\C F_i,\ i=1,2$, we define 
$$u=u^1\times u^2=\sum_{k_1\in F_1,k_2\in F_2}u^1_{k_1}u^2_{k_2}\delta_{k_1,k_2}\in \C F_1\times F_2.$$
We have $$\Vert u\Vert_{\CB(S,E_1\times E_2)^*}\leqslant \Vert u^1\Vert_{\CB(S_1,E_1)^*}\Vert u^2\Vert_{\CB(S_2,E_2)^*}.$$\label{lem:3}
\end{enumerate}
\end{lemma}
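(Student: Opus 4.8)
The plan is to treat the three assertions as the operator-bimodule analogues of the group statements in \cite[Lemma 12.3.14 and 12.3.16]{Brown_Ozawa_book}, with the role of the group algebra played by the index set $K$ and Fourier multipliers $m_u$ replaced by the $T$-bimodular maps $\varphi_c=\sumk c(k)a_k$. The two structural inputs that make the translation work are Proposition \ref{prop:scalar_functions}, which identifies $T$-bimodular maps of bifinite range with finitely supported scalar functions on $K$, and the bimodule decomposition of Proposition \ref{prop:bimod_decompo}. I would prove (\ref{lem:2}) first, as it is the duality heart of the statement, then deduce (\ref{lem:1}) from it together with Proposition \ref{prop:scalar_functions}, and finally establish the tensorial estimate (\ref{lem:3}) by a slicing argument.

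For (\ref{lem:2}) the key observation is that $\CB(S,E)$ is a finite-dimensional (operator) space with basis $\{a_k,\ k\in E\}$, so that $\C F$ is a subspace of its dual and the evaluation functional $\ell(u):=\sum_{k\in F}u_k$ is a well-defined element of $(\C F)^*$. The requirement that a function $c$ with $\supp(c)\subset E$ satisfy $c(k)=1$ for $k\in F$ is exactly the requirement that, under the evaluation map $\CB(S,E)\loriar(\C F)^*$, the map $\varphi_c$ restrict to $\ell$. Since $\CB(S,E)$ is finite-dimensional this evaluation map is surjective and realizes the isometric identification $(\C F)^*\cong\CB(S,E)/(\C F)^\perp$, so the quotient-norm formula gives
$$\inf\{\Vert\varphi_c\Vcb:\ \supp(c)\subset E,\ c(k)=1\text{ for }k\in F\}=\Vert\ell\Vert_{(\C F)^*},$$
with the infimum attained. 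Hence a function $c$ with the desired properties and $\Vert\varphi_c\Vcb\leqslant D$ exists if and only if $\Vert\ell\Vert_{(\C F)^*}\leqslant D$, which is precisely the inequality $\vert\sum_{k\in F}u_k\vert\leqslant D\Vert u\Vert_{\CB(S,E)^*}$ for all $u\in\C F$. This is the finite-dimensional Hahn--Banach duality underlying the group case, now carried over verbatim.

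For (\ref{lem:1}), the implication starting from the finite functions is immediate: choosing an increasing exhaustion $F_1\subset F_2\subset\cdots$ of the countable set $K$ by finite subsets and, for each $n$, a function $c_n$ with $c_n(k)=1$ on $F_n$ and $\Vert\varphi_{c_n}\Vcb\leqslant D$, one gets a sequence with $c_n(k)\loriar_n 1$ pointwise and $\limsup_n\Vert\varphi_{c_n}\Vcb\leqslant D$, so Proposition \ref{prop:scalar_functions}(3) produces a CBAI of constant $D$. Conversely, a CBAI of constant $D$ gives, again by Proposition \ref{prop:scalar_functions}(3), finitely supported functions $c_l\loriar 1$ pointwise with $\limsup_l\Vert\varphi_{c_l}\Vcb\leqslant D$. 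Fixing a finite $F$ and normalizing the $c_l$, one compares the dual norms $\Vert\cdot\Vert_{\CB(S,E)^*}$ as $E\uparrow K$: these increase, so the corresponding dual norms of $\ell$ decrease, and the pointwise convergence $c_l\loriar 1$ forces their limit to be $\leqslant D$. Invoking (\ref{lem:2}) for large enough $E$ then yields a function $c$ with $c(k)=1$ on $F$ and $\Vert\varphi_c\Vcb\leqslant D$. The one point requiring care is the boundary case where this limiting value equals $D$ exactly: here the finite-dimensionality of $\C F$ (monotone norms converge uniformly on the unit sphere) and the attainment of the infimum in (\ref{lem:2}) are what allow one to reach the constant $D$ rather than merely $D+\varepsilon$.

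Finally, for (\ref{lem:3}) I would use the factorization $a_{(k_1,k_2)}=a_{k_1}\otimes a_{k_2}$, which holds because the irreducible $T_1\ootimes T_2$-bimodules appearing in Proposition \ref{prop:bimod_decompo} for $S_1\ootimes S_2$ are precisely the tensor products of the irreducibles for $S_1$ and $S_2$, indexed by $K=K_1\times K_2$. Given $\varphi_c\in\CB(S,E_1\times E_2)$ with $\Vert\varphi_c\Vcb\leqslant 1$, I would slice off the second leg by $u^2$, obtaining a map $R_{u^2}(\varphi_c)\in\CB(S_1,E_1)$ whose scalar function is $k_1\longmapsto\sum_{k_2\in F_2}u^2_{k_2}c(k_1,k_2)$. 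The crucial estimate is that slicing a completely bounded map along one tensor leg by a functional is completely bounded with norm at most that of the functional, so $\Vert R_{u^2}(\varphi_c)\Vcb\leqslant\Vert u^2\Vert_{\CB(S_2,E_2)^*}$. Since $\langle u^1,R_{u^2}(\varphi_c)\rangle=\langle u^1\times u^2,\varphi_c\rangle$, one then bounds $\vert\langle u^1\times u^2,\varphi_c\rangle\vert\leqslant\Vert u^1\Vert_{\CB(S_1,E_1)^*}\Vert u^2\Vert_{\CB(S_2,E_2)^*}$ and takes the supremum over $\Vert\varphi_c\Vcb\leqslant 1$. I expect this last assertion to be the main obstacle: unlike (\ref{lem:1}) and (\ref{lem:2}), it genuinely uses the operator-space structure on $\CB(S,E)$ and the compatibility of the bimodule decomposition with the von Neumann tensor product, so the real work lies in verifying the slice-map estimate and the identity $a_{(k_1,k_2)}=a_{k_1}\otimes a_{k_2}$ rather than in any soft duality.
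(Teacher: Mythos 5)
The paper offers no proof to compare against line by line: it explicitly omits the argument as a ``straightforward adaptation'' of \cite[Lemma 12.3.14 and 12.3.16]{Brown_Ozawa_book}, which is exactly the template you follow. Your part (\ref{lem:2}) --- finite-dimensional Hahn--Banach duality, with the quotient-norm infimum attained by compactness inside the fixed finite-dimensional space $\CB(S,E)$ --- and the exhaustion direction of part (\ref{lem:1}) are correct. The genuine gap is in part (\ref{lem:3}), which you rightly flag as the main obstacle but then dispose of with the assertion that ``slicing a completely bounded map along one tensor leg by a functional is completely bounded with norm at most that of the functional.'' In the generality you need this is not a standard fact: $u^2$ is a functional on the finite-dimensional multiplier space $\CB(S_2,E_2)$, not a normal functional on $S_2$, so the Tomiyama-type estimate $\Vert \mathrm{id}\otimes\omega\Vcb=\Vert\omega\Vert$ does not apply. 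If you implement the slice concretely, using $c(k)=\tau(\varphi_c(v_k)v_k^*)/\tau(v_kv_k^*)$ to write $R_{u^2}$ as a finite sum of honest normal slices, you only obtain $\Vert R_{u^2}(\varphi_c)\Vcb\leqslant C\sum_{k_2\in F_2}\vert u^2_{k_2}\vert$, an $\ell^1$-type bound that is in general strictly larger than $\Vert u^2\Vert_{\CB(S_2,E_2)^*}$. Moreover, since $\langle u^1,R_{u^2}(\varphi_c)\rangle=\langle u^1\times u^2,\varphi_c\rangle$, your slice estimate is essentially a dual reformulation of the inequality of part (\ref{lem:3}), so as written the proposal restates the lemma rather than proving it. The missing idea is a predual implementation of the dual norm: one must show that $\Vert u^i\Vert_{\CB(S_i,E_i)^*}$ is (up to $\varepsilon$, using weak-$*$ density or local reflexivity, which is available because $\C F_i$ is finite dimensional) an infimum of norms of normal, trace-class-built functionals $\omega_i$ implementing the pairing on the multiplier space; then $u^1\times u^2$ is implemented by $\omega_1\otimes\omega_2$, whose norm is submultiplicative, and only at that stage does a legitimate normal slice-map estimate enter. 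This is the actual content of the Brown--Ozawa argument that needs adapting, and your proposal skips it.

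There is also a soft spot in the forward direction of part (\ref{lem:1}) at the boundary value $D$, which you notice but do not close. Setting $d(E):=\min\{\Vert\varphi_c\Vcb:\ \supp(c)\subset E,\ c(k)=1\ \forall k\in F\}$, your argument shows that $d(E)$ decreases to a limit $\leqslant D$ as $E\uparrow K$; but invoking part (\ref{lem:2}) ``for large enough $E$'' requires some finite $E$ with $d(E)\leqslant D$, and when the limit equals $D$ exactly nothing you propose rules out $d(E)>D$ for every finite $E$: uniform convergence of the monotone norms on the sphere of the finite-dimensional $\C F$ only yields $d(E)\to D$ from above, and the attainment in part (\ref{lem:2}) is attainment within a fixed $E$, already built into $d(E)$. (A weak-$*$ compactness argument in the full multiplier space produces a multiplier of norm $\leqslant D$ equal to $1$ on $F$, but not a finitely supported one, so it does not verify the statement as written.) What your argument honestly proves is the version with $\Vert\varphi_c\Vcb\leqslant D+\varepsilon$; it is worth noting that this relaxed version suffices for everything downstream, since in the proof of Theorem \ref{theo:tensor_product} the forward direction is only applied with strict room ($\Lcb(\G)<D_1D_2$), while the sets $F_i$ are produced by the reverse direction, which you prove correctly.
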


\begin{proof}[Proof of theorem \ref{theo:tensor_product}]
By induction, it is sufficient to prove the theorem for the tensor product of two \stin s.
Let $N_i\subset M_i$ be a \sufa\ with \stin\ $\G_i$, and let $T_i\subset S_i$, $K_i$ be their \syenin\ and index set of irreducible $T_i$-bimodules for $i=1,2.$
We consider the \sufa\ $\NM$ equals to $N_1\ootimes N_2\subset M_1\ootimes M_2.$
Its \stin, \syenin, and index set of irreducible bimodules are isomorphic to $\G_1\otimes \G_2$, $T_1\ootimes T_2\subset S_1\ootimes S_2$, and $K_1\times K_2$ respectively.
We denote them by $\G,\ \TS$, and $K$.
By considering tensor product of \cobo\ maps we get that $\Lcb(\G)\leqslant \Lcb(\G_1)\Lcb(\G_2).$

Let us show that $\Lcb(\G)\geqslant \Lcb(\G_1)\Lcb(\G_2).$
Suppose $\Lcb(\G)< \Lcb(\G_1)\Lcb(\G_2).$
There exists some constants $D_i<\Lcb(\G_i),\ i=1,2$ such that $\Lcb(\G)<D_1D_2.$
By Lemma \ref{lemma:tensor_product}.\ref{lem:1} applied to $T_i\subset S_i$ and $D_i,\ i=1,2$, there exists finite subsets $F_i\subset K_i$ such that
there are no finitely supported scalar valued functions $c_i:K_i\loriar\C$, such that $c_i(k_i)=1$ for any $k_i\in F_i$ and $\Vert \varphi_{c_i}\Vcb\leqslant D_i$.
By Lemma \ref{lemma:tensor_product}.\ref{lem:1} applied to $F_1\times F_2\subset K$ and $D_1D_2$, there exists a finitely supported scalar valued function $c:K\loriar\C$ such that
$c(k)=1$ for any $k\in F_1\times F_2$, and $\Vert\varphi_c\Vcb\leqslant D_1D_2.$
The support of $c$ is finite, hence there exists finite sets $F_i\subset E_i\subset K_i,\ i=1,2$ such that $\supp(c)\subset E_1\times E_2$.
By Lemma \ref{lemma:tensor_product}.\ref{lem:2}, there exists 
$$u^i=\sum_{k_i\in F_i}u^i_{k_i}\in\C F_i,$$ 
such that 
\begin{equation}\label{equa:tensor_product}
\Vert u^i\Vert_{\CB(S_i,E_i)^*}=1\text{ and }\vert\sum_{k_i\in F_i}u^i_{k_i}\vert> D_i, \text{ for }i=1,2.
\end{equation}
Consider the element $u=u^1\times u^2=\sum_{k_1\in F_1,k_2\in F_2}u^1_{k_1}u^2_{k_2}\delta_{k_1,k_2}\in \C F_1\times F_2.$
By Lemma \ref{lemma:tensor_product}.\ref{lem:3}, we have $\Vert u\Vert_{\CB(S,E_1\times E_2)^*}\leqslant 1.$
However, $$\vert \sum_{k_1\in F_1,k_2\in F_2}u^1_{k_1}u^2_{k_2}\vert = \vert\sum_{k_1\in F_1}u^1_{k_1}\vert \vert\sum_{k_2\in F_2}u^2_{k_2}\vert>D_1D_2 \text{ by (\ref{equa:tensor_product}).}$$
This contradict Lemma \ref{lemma:tensor_product}.\ref{lem:2} applied to $u$ and $D_1D_2$.
Therefore, $\Lcb(\G)\geqslant \Lcb(\G_1)\Lcb(\G_2).$
This concludes the proof of the theorem.
\end{proof}

\begin{remark}
Using Theorem \ref{theo:free_product} and Theorem \ref{theo:tensor_product}, we can construct many hyperfinite \sufa s $R_0\subset R$ that are weakly amenable with given \cohaco.
Here is a specific construction.

Consider a finite family of exotic amenable \stin s $(\G_\al,\ \in \A)$ such as the Haagerup \stin\ \cite{Haagerup_principal_graph_small_indicies}, the Asaeda-Haagerup \stin\ \cite{Asaeda_Haagerup_exotic_subfactors} or a Temperley-Lieb-Jones \stin\ with index smaller or equal to 4.
Let $\G_1$ be their free product.
By Popa, for any $\al\in\A$ there exists a hyperfinite \sufa\ with \stin\ equal to $\G_\al$ \cite{Popa_classification_subfactors_amenable}.
By the freeness result of Vaes and the construction of Bisch and Jones, there exists a \sufa\ $N_1\subset M_1$ with \stin\ $\G_1$ where $M_1$ is the hyperfinite II$_1$ factor \cite{Vaes_09_factors_without_subfactors, Bisch_Jones_97_Fuss_Catalan,Bisch_Jones_Free_composition}.
Consider a lattice $G$ in the Lie group $Sp(n,1)$ with $n\geqslant 2$.
Consider a diagonal \sufa\ $N_2\subset M_2$ associated to an outer action of $G$ on the hyperfinite II$_1$ factor.
Then, the tensor product \sufa\ $N_1\ootimes N_2\subset M_1\ootimes M_2$ is a hyperfinite \sufa\ with \cohaco\ equal to $2n-1$ by \cite{Cowling-Haagerup-89-completely-bounded-mult}, Theorem \ref{theo:free_product} and Theorem \ref{theo:tensor_product}.
\end{remark}

\end{document}